\newtheorem{thm}{Theorem}[section]
\newtheorem{lem}[thm]{Lemma}
\newtheorem{cor}[thm]{Corollary}
\newtheorem{prop}[thm]{Proposition}
\newtheorem{addendum}[thm]{Addendum}
\theoremstyle{definition}
\newtheorem{defn}[thm]{Definition}
\theoremstyle{remark}
\newtheorem{rem}[thm]{Remark}
\numberwithin{equation}{section}
\newcommand{\thmref}[1]{Theorem~\ref{#1}}
\newcommand{\corref}[1]{Corollary~\ref{#1}}
\newcommand{\secref}[1]{\S\ref{#1}}
\newcommand{\propref}[1]{Proposition~\ref{#1}}
\newcommand{\lemref}[1]{Lemma~\ref{#1}}
\newcommand{\defref}[1]{Definition~\ref{#1}}
\newcommand{\Hom}{\operatorname{Hom}}
\newcommand{\Ext}{\operatorname{Ext}}
\newcommand{\IM}{\operatorname{im}}
\newcommand{\ModA}{\operatorname{Mod}\text{-}A}
\newcommand{\A}{{ A}}
\newcommand{\U}{{\mathcal  U}}
\newcommand{\M}{{\mathcal  M}}
\newcommand{\Qq}{{\mathcal Q}}
\newcommand{\QM}{{\mathcal Q \mathcal M}}
\newcommand{\QU}{{\mathcal Q \mathcal U}}
\newcommand{\Z}{{\mathbb  Z}}
\newcommand{\R}{{\mathbb  R}}
\newcommand{\Sinfty}{\Sigma^{\infty}}
\newcommand{\Oinfty}{\Omega^{\infty}}
\newcommand{\sm}{\wedge}
\newcommand{\ra}{\rightarrow}
\newcommand{\xra}{\xrightarrow}
\newcommand{\hra}{\hookrightarrow}
\newcommand{\era}{\twoheadrightarrow}
\begin{document}

\title[Dyer-Lashof operations as extensions]{Dyer-Lashof operations as extensions of Brown-Gitler Modules}

\author[Kuhn]{Nicholas J.~Kuhn}
\address{Department of Mathematics \\ University of Virginia \\ Charlottesville, VA 22904}
\email{njk4x@virginia.edu}

\date{June 25, 2023. Revised, November 6, 2024.}

\subjclass[2000]{Primary 55S12; Secondary 55S10, 55T15}

\begin{abstract}  At the prime 2, let $T(n)$ be the $n$ dual of the $n$th Brown-Gitler spectrum with mod 2 homology $G(n)$.  Our previous work on the spectral sequence computing $H_*(\Oinfty X)$ arising from the Goodwillie tower of $\Sinfty \Oinfty: \text{Spectra} \ra \text{Spectra}$ led us to observe that there are extensions between various of the right $A$-modules $G(n)$ such that splicing with these gives an action of the Dyer-Lashof algebra on $\Ext_A^{*,*}(G(\star),M)$.  

We give explicit constructions of these `Dyer-Lashof operation' extensions: one construction relates them to the cofiber sequence associated to the $\Z/2$-transfer. Another relates key `squaring' Dyer-Lashof operations to the Mahowald short exact sequences.  Finally, properties of the spectra $T(n)$ allow us to geometrically realize our extensions by cofibration sequences, with implications for the Adams spectral sequences computing $[T(\star), X]$.

\end{abstract}

\maketitle

\section{Introduction and main results} \label{introduction}

The mod 2 homology of a spectrum $X$, $H_*(X)$, is a locally finite right module over the mod 2 Steenrod algebra $A$, with Steenrod operations lowering degree.  By contrast, $H_*(\Oinfty X)$, the mod 2 homology of the associated infinite loopspace $\Oinfty X$, is a more complicated object. In particular, it admits Dyer--Lashof operations, and its right $A$--module structure will be unstable: $xSq^i = 0$ whenever $2i>|x|$.

Paper \cite{kuhn mccarty} was a study of the homology spectral sequence associated to the Goodwillie tower of $\Sinfty_+ \Oinfty: \text{Spectra} \ra \text{Spectra}$.
This converges to $H_*(\Oinfty X)$, with an $E_1$--page that is an algebraic functor of $H_*(X)$: the enveloping algebra of $\mathcal R_*(H_*(X))$.  Here $\mathcal R_*(N)_{\star}$ is the free allowable module over the Dyer-Lashof algebra generated by a right $A$--module $N$: see \defref{R def} for more detail. This is a bigraded vector space with $\mathcal R_0(N) = N$, Dyer-Lashof operations
$$ Q^r: \mathcal R_s(N)_n \ra \mathcal R_{s+1}(N)_{n+r},$$
zero for $r < n$, and Steenrod operations
$$ Sq^k: \mathcal R_s(N)_n \ra \mathcal R_{s}(N)_{n-k}.$$

Part of \cite{kuhn mccarty} was the construction of an algebraic spectral sequence which in `good cases' agreed with the topological one. The starting point for our paper here, and related to understanding the $E_{\infty}$--page of the algebraic spectral sequence, is the following observation: 

\begin{prop} \label{starting point prop} If $M$ is a locally finite right $A$--module, the bigraded vector space $ \Ext_A^{*,*}(G(\star),M)$
is a natural subquotient of $ \mathcal R_*(\Sigma^{-1}M)_{\star-1}$,
as right $A$--modules equipped with Dyer-Lashof operations.  
\end{prop}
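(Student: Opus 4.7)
The plan is to exhibit a natural differential $d$ on $\mathcal R_*(\Sigma^{-1}M)_{\star-1}$, of bidegree $(+1, 0)$ in the (Dyer--Lashof length, internal degree) grading, so that its cohomology is exactly $\Ext^{*,*}_A(G(\star), M)$. Since the cohomology $\ker d / \IM d$ of any chain complex is automatically a subquotient of its underlying bigraded vector space, this will deliver the proposition, with the natural $Q^r$- and Steenrod structures descending from the ambient ones on $\mathcal R_*$ provided they commute with $d$.

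First I would recall the explicit admissible-monomial basis of $\mathcal R_s(N)$, namely classes $Q^{i_1} \cdots Q^{i_s} x$ with $(i_1, \dots, i_s)$ admissible, $i_s \geq |x|$, and $x$ running over a basis of $N$. Applied to $N = \Sigma^{-1} M$, this packages exactly those admissible Dyer--Lashof words acting on shifted classes of $M$ and is the same data one sees in $\Hom_A(P_\bullet, M)$ for a Koszul-type projective resolution $P_\bullet \to G(\star - 1)$ in which $P_s$ is a direct sum of shifted copies of $A$ indexed by length-$s$ admissible sequences with prescribed terminal degree. Producing such a resolution is the technical heart: I would assemble it either by iterating the Mahowald short exact sequences (which the paper will use later in any case) or by invoking Koszul duality between $A$ and the Dyer--Lashof algebra.

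The differential $d$ is then dual to the resolution's boundary: schematically, it transfers a Steenrod operation acting on $M$ on the right into an increment of the leftmost Dyer--Lashof index, modulo Adem corrections restoring admissibility. One must then verify that $d^2 = 0$ (using Adem together with Nishida), that $d$ commutes with the Dyer--Lashof operations $Q^r$ already present on $\mathcal R_*$ so that they descend to cohomology, and that the right $A$-action on $\mathcal R_*$ produced from $M$ via Nishida also commutes with $d$. The main obstacle is the Koszul-resolution step; once $\Hom_A(P_\bullet, M)$ has been identified with $\mathcal R_*(\Sigma^{-1}M)_{\star - 1}$ as a bigraded vector space equipped with both the $Q^r$-action and the Steenrod action, the remaining compatibilities reduce to standard manipulations with the Adem and Nishida relations, and naturality in $M$ is manifest.
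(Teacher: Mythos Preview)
Your outline has a genuine structural gap. You propose to put a differential of bidegree $(+1,0)$ on $\mathcal R_*(\Sigma^{-1}M)$ itself and identify its cohomology with $\Ext_A^{*,*}(G(\star),M)$. But no such internal differential is what produces the subquotient in question. In the paper (following Singer, Miller, Lannes--Zarati, and \cite{kuhn mccarty}) the relevant maps go between \emph{different} suspensions: one has
\[
\mathcal R_{s-1}(\Sigma^{-2}M)\xrightarrow{d_{s-1}}\mathcal R_s(\Sigma^{-1}M)\xrightarrow{d_s}\mathcal R_{s+1}(M),
\]
with $d_s(Q^I\sigma^{-1}x)=\sum_i Q^IQ^{i-1}(xSq^i)$, and the homology at the middle term is $\Sigma^{-1}\Oinfty_s(\Sigma^{-s}M)$. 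This exhibits $\Ext_A^{s,s}(G(\star),M)=(\Oinfty_s\Sigma^{-s}M)_{\star}$ as a subquotient of $\mathcal R_s(\Sigma^{-1}M)_{\star-1}$, but not as the cohomology of a cochain complex structure on $\mathcal R_*(\Sigma^{-1}M)$ alone. The $\QM$-structure descends because $d_*:\mathcal R_*(\Sigma^{-1}M)\to\mathcal R_{*+1}(M)$ is a map in $\QM$, so both the kernel of the outgoing map and the image of the incoming map are sub-$\QM$-objects.

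Your projective-resolution plan also runs into a category-level obstruction: the paper works in $\M$, the category of locally finite right $A$-modules, and $\M$ has no projectives (in particular $A$ itself is not locally finite as a right $A$-module, so ``shifted copies of $A$'' are not even objects of $\M$). The Mahowald short exact sequences you mention live in $\U$ and involve the $G(m)$, not free $A$-modules, so iterating them does not yield the free resolution you describe. The paper instead computes via injective resolutions on the \emph{second} variable: one forms the cochain complex $R_s(M)=\Sigma\mathcal R_s(\Sigma^{s-1}M)$, checks that $R_*$ is exact, that $H^0(R_*(M))\simeq\Oinfty M$, and that $R_*$ is acyclic on the injectives $\Sigma^n A_*$, which forces $H^s(R_*(M))\simeq\Oinfty_s M$. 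The proposition then follows from the identification $\Ext_A^s(G(\star),M)\simeq(\Oinfty_s M)_{\star}$.
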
 
Here $G(n)$ is the free unstable right $A$--module on a top degree class $\iota_n$ of degree $n$. There is a natural isomorphism
$$\Hom_{A}(G(n),M) \simeq M_n$$
for all unstable right $A$--modules $M$.  The $\Ext$-groups are computed in the abelian category of locally finite right $A$-modules, or, equivalently, in the abelian category of $A_*$-comodules: see \secref{categories sec}.

The proposition tells us that for all locally finite right $A$--modules $M$, $n \geq 0$, and $r \geq 0$, there exist natural Dyer-Lashof operations
$$ Q^r:  \Ext_A^{s,s}(G(n),M) \ra \Ext_{A}^{s+1,s+1}(G(n+r), M),$$
zero for $r < n-1$, and Steenrod operations
$$ Sq^k:  \Ext_A^{s,s}(G(n),M) \ra \Ext_{A}^{s,s}(G(n-k), M),$$
satisfying the usual properties.

The Steenrod operations here are easily understood: Yoneda's lemma implies that $a \in A^k$ induces a map of right $A$--modules $a \cdot: G(n) \ra G(n+k)$.

More curious are the Dyer-Lashof operations.  Fixing $n$ and $r$, the properties of $Q^r$ imply that $Q^r$ must be induced by Yoneda splice with the extension
$$ Q(n,r) = Q^r(1_{G(n)}) \in \Ext_{A}^{1,1}(G(n+r),G(n)).$$

The extensions $Q(n,r)$ can be regarded as short exact sequences
\begin{equation} \label{Qnr defn} 0 \ra \Sigma^{-1} G(n) \ra Q(n,r) \ra G(n+r) \ra 0.
\end{equation}

The first goal of this paper is to give a construction of this interesting family of short exact sequences that relates them all to the single short exact sequence
\begin{equation} \label{basic ses} 0 \ra H_*(S^{-1}) \ra H_*(P_{-1}) \ra H_*(P_0) \ra 0
\end{equation} 
obtained by applying homology to the cofiber sequence
\begin{equation} \label{basic cofib seq} S^{-1} \ra P_{-1} \ra P_0.
\end{equation}
Here $P_k$ is the Thom spectrum of $k$ copies of the canonical line bundle over $\R \mathbb P^{\infty}$.
We also give alternative descriptions of the extensions $Q(n,n-1)$ and $Q(n,n)$, relating them to Mahowald short exact sequences, with a computationally useful consequence.

Then we observe that all these algebraic results can be topologically realized.  The $A$-module $G(n)$ arises as $H_*(T(n))$, where $T(n)$ is the $n$-dual of the $n$th Brown-Gitler spectrum.  Properties of the $T(n)$ can be used to show that there exist cofibration sequences realizing our extension in mod 2 homology.

We now describe our results in more detail.

\subsection{Algebraic results related to (\ref{basic ses})}

Given a right $A$-module $M$, let $E_M$ be the short exact sequence of $A$-modules
$$ 0 \ra \Sigma^{-1}M \ra M \otimes H_*(P_{-1}) \ra M \otimes H_*(P_0) \ra 0 $$
obtained by tensoring (\ref{basic ses}) with $M$.  The extension $E_M$ will then induce a natural transformation
$$ \delta_M: \Hom_A(N,M \otimes H_*(P_0)) \ra \Ext_A^{1,1}(N,M)$$
defined by sending $\alpha: N \ra M \otimes H_*(P_0)$ to the pullback of $E_M$ along $\alpha$.

\begin{prop} \label{unstable M prop}  If $M$ is unstable, then for all $m\geq 0$,
$$\delta_M: \Hom_A(G(m),M \otimes H_*(P_0)) \ra \Ext_A^{1,1}(G(m),M)$$ will be onto. If  also $M$ has top nonzero degree $n$ and $m \ge 2n-1$, then $\delta_M$ will be an isomorphism. 
\end{prop}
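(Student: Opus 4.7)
The plan is to apply the left exact functor $\Hom_A(G(m), -)$ to the short exact sequence $E_M$, producing the long exact sequence
\[ \Hom_A(G(m), M \otimes H_*(P_{-1})) \xrightarrow{q_*} \Hom_A(G(m), M \otimes H_*(P_0)) \xrightarrow{\delta_M} \Ext^{1,1}_A(G(m), M) \to \Ext^1_A(G(m), M \otimes H_*(P_{-1})). \]
Surjectivity of $\delta_M$ is then equivalent to the vanishing $\Ext^1_A(G(m), M \otimes H_*(P_{-1})) = 0$. For injectivity, I would use that $\Hom_A(G(m), \Sigma^{-1}M) = M_{m+1}$ vanishes when $m \geq n$; the earlier portion of the long exact sequence then makes $q_*$ injective, so injectivity of $\delta_M$ reduces to showing $\Hom_A(G(m), M \otimes H_*(P_{-1})) = 0$.

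For the surjectivity claim, I would attempt to split an arbitrary extension $0 \to M \otimes H_*(P_{-1}) \to E \to G(m) \to 0$ directly. Starting from any lift $\tilde\iota_m \in E_m$ of the generator $\iota_m$, the family $\{\tilde\iota_m Sq^j : 2j > m\}$ forms a compatible collection of obstructions in $M \otimes H_*(P_{-1})$. The key structural feature of $H_*(P_{-1})$---that the sphere class $a_{-1}$ is hit by $a_p Sq^{p+1}$ for every $p \geq 0$---gives the $A$-action on $M \otimes H_*(P_{-1})$ enough flexibility to absorb each obstruction into a correction of $\tilde\iota_m$ by an appropriate element $\xi \in (M \otimes H_*(P_{-1}))_m$, producing the splitting.

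For injectivity under $m \geq 2n - 1$, take a putative $y \in \Hom_A(G(m), M \otimes H_*(P_{-1}))$ and write $y = \sum_{k=0}^n m_k \otimes a_{m-k}$ (no $a_{-1}$ term can appear since $M_{m+1} = 0$). The Cartan formula shows that the $a_{-1}$ component of $y Sq^j$ equals $\sum_{k \geq m - j + 1} m_k Sq^{j - m + k - 1} \in M_{m - j + 1}$. Running the equations $(y Sq^j)_{a_{-1}} = 0$ over $j = \lceil (m+1)/2 \rceil, \lceil (m+1)/2 \rceil + 1, \ldots, m + 1$ yields, by induction on $j$, first $m_n = 0$, then $m_{n-1} = 0$, and so on down to $m_0 = 0$; the hypothesis $m \geq 2n - 1$ is precisely the condition that makes the very first equation begin at the top index $n$, so the descending induction covers all of the $m_k$.

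The main obstacle is the surjectivity step, i.e., making the correction-of-$\tilde\iota_m$ argument precise (equivalently, establishing the structural $\Ext^1$-vanishing). Once that is in hand, the injectivity half reduces to the direct degree calculation sketched above.
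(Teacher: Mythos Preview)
Your injectivity half is fine and aligns with the paper: both arguments reduce to showing $\Hom_A(G(m), M\otimes H_*(P_{-1}))=0$ for $m\ge 2n-1$, and both exploit the relation $t_kSq^{k+1}=t_{-1}$. The paper does this more quickly by filtering $M$ by degree and reducing to $M=\Sigma^k\Z/2$ with $k\le n$, but your direct inspection of the $t_{-1}$-component of $ySq^j$ is correct.

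The surjectivity half, however, has a genuine gap. First a minor logical slip: surjectivity of $\delta_M$ is only \emph{implied by} the vanishing of $\Ext^1_A(G(m), M\otimes H_*(P_{-1}))$, not equivalent to it; exactness only requires the map $\Ext^1_A(G(m),\Sigma^{-1}M)\to\Ext^1_A(G(m),M\otimes H_*(P_{-1}))$ to be zero. More seriously, your proposed splitting argument is not a proof. You have not said how to actually produce a $\xi$ killing all the obstructions $\tilde\iota_m Sq^j$ simultaneously, and even if you could, arranging $(\tilde\iota_m+\xi)Sq^j=0$ for $2j>m$ is not obviously sufficient to define an $A$-map $G(m)\to E$: one needs the entire cyclic submodule $(\tilde\iota_m+\xi)\cdot A\subset E$ to be unstable, which imposes infinitely many further conditions on lower-degree classes. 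Proving the full $\Ext^1$-vanishing you want (if it even holds) would amount to a Singer-construction acyclicity statement, and your sketch does not supply one.

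The paper sidesteps this entirely. Instead of showing that the next term in the long exact sequence vanishes, it uses the identification $\Ext^{1,1}_A(G(\star),M)\simeq\Omega^\infty_1(\Sigma^{-1}M)$ together with the fact (from the $\mathcal R_*$-complex, valid because $M$ is unstable) that $\Omega^\infty_1(\Sigma^{-1}M)$ is the cokernel of an explicit map $d_0\colon\Sigma^{-1}M\to\Sigma\mathcal R_1(\Sigma^{-1}M)$. It then \emph{exhibits explicit preimages}: the element $q(x,r)=\sum_j x\chi(Sq^j)\otimes t_{r+j}\in M\otimes H_*(P_0)$ is shown by direct calculation to satisfy $\delta_M(q(x,r))=\rho(\sigma Q^r\sigma^{-1}x)$, and such classes span the target. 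Surjectivity thus falls out of the same computation that pins down the extensions $Q(n,r)$, with no acyclicity of $M\otimes H_*(P_{-1})$ required.
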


\begin{cor} If $r \geq n-1$, then 
$$\delta_{G(n)}: \Hom_A(G(n+r),G(n)\otimes H_*(P_0)) \ra \Ext_A^{1,1}(G(n+r),G(n))$$ is an isomorphism.
\end{cor}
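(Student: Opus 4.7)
The plan is to obtain this corollary as a direct specialization of \propref{unstable M prop}, taking $M = G(n)$. The bulk of the work has already been done in the proposition, so only two pieces of bookkeeping remain: checking that $G(n)$ satisfies the hypotheses of the isomorphism clause, and matching the numerical condition on the degree.

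To execute the first step I would observe that, by construction, $G(n)$ is the free unstable right $A$-module on a top-degree class $\iota_n$ of degree $n$. Hence $G(n)$ is unstable, and, because it is generated by $\iota_n$ under the right $A$-action (which lowers degree), its top nonzero degree is exactly $n$. Thus $G(n)$ plays precisely the role of the module "$M$ with top nonzero degree $n$" in the statement of \propref{unstable M prop}.

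For the second step, the proposition gives the isomorphism
$$ \delta_{G(n)}: \Hom_A(G(m), G(n)\otimes H_*(P_0)) \ra \Ext_A^{1,1}(G(m), G(n)) $$
as soon as $m \geq 2n - 1$. Substituting $m = n+r$, this inequality becomes $n + r \geq 2n - 1$, i.e., $r \geq n - 1$, which is exactly the hypothesis of the corollary.

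There is essentially no obstacle to the proof beyond this bookkeeping: the substance lies entirely in \propref{unstable M prop}. The role of the corollary is to package the special case that will be used in the sequel to realize the Dyer-Lashof extensions $Q(n,r) \in \Ext_A^{1,1}(G(n+r), G(n))$ as $\delta_{G(n)}$ applied to a specific map $G(n+r) \to G(n) \otimes H_*(P_0)$.
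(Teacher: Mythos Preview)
Your proposal is correct and is exactly the intended argument: the paper states this corollary immediately after \propref{unstable M prop} without proof, because it is the direct specialization $M = G(n)$, $m = n+r$, with the numerical condition $m \ge 2n-1$ unwinding to $r \ge n-1$.
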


It follows that our extension (\ref{Qnr defn}), i.e. $Q^r(i_n) \in \Ext_A^{1,1}(G(n+r),G(n))$, will correspond to an $A$-module map in $\Hom_A(G(n+r), G(n) \otimes H_*(P_0))$.

Our next result is the identification of this map.   To explain this, we need to say a little bit about the modules $G(n)$.  As will be explained in \secref{background section}, $G(n)_k$ has an additive basis which is indexed on a subset of the Milnor basis of $A^{n-k}$.  Thus $G(n) \otimes H_*(P_0)$ has a preferred basis. Since $P_0 = \Sinfty \mathbb R \mathbb P^{\infty}_+$, $H_*(P_0)$ is unstable, and thus so is $G(n) \otimes H_*(P_0)$.     Now define
$$ q(n,r): G(n+r) \ra H_*(P_0) \otimes G(n)$$
to be the unique $A$--module map sending the generator of $G(n+r)$ to the sum of all the basis elements in $(H_*(P_0) \otimes G(n))_{n+r}$.

\begin{thm} \label{main thm} $\delta_{G(n)}(q(n,r)) = Q^r(i_n)$. Equivalently, there is a pullback diagram of extensions of right $A$--modules
\begin{equation*} \label{pushout diagram}
\SelectTips{cm}{}
\xymatrix{
0  \ar[r] & \Sigma^{-1} G(n)   \ar[r]& Q(n,r) \ar[d] \ar[r]& G(n+r)  \ar[d]^{q(n,r)} \ar[r] & 0 \\
0  \ar[r] & \Sigma^{-1} G(n) \ar@{=}[u]  \ar[r]& G(n) \otimes H_*(P_{-1})  \ar[r]& G(n) \otimes H_*(P_0)   \ar[r] & 0. }
\end{equation*}
\end{thm}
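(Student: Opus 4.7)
The strategy is to exploit the corollary to \propref{unstable M prop}, which identifies $\Ext_A^{1,1}(G(n+r), G(n))$ with $\Hom_A(G(n+r), G(n) \otimes H_*(P_0))$ via $\delta_{G(n)}$ when $r \geq n-1$. Under this identification, the theorem reduces to recognizing the preimage of $Q^r(i_n)$ as the explicitly defined map $q(n,r)$. In the complementary range $r < n-1$, the class $Q^r(i_n)$ vanishes since $Q^r$ acts as zero on $\mathcal R_s(N)_m$ for $r < m$, and $\Ext_A^{s,s}(G(n),M)$ embeds into $\mathcal R_s(\Sigma^{-1}M)_{n-1}$. The required equality $\delta_{G(n)}(q(n,r)) = 0$ then reduces to producing an $A$--module lift of $q(n,r)$ through the projection $G(n) \otimes H_*(P_{-1}) \to G(n) \otimes H_*(P_0)$, which I would verify by a direct degree/unstability argument using that, for $r \geq 0$, the degree-$(n+r)$ part of $G(n) \otimes H_*(P_{-1})$ agrees additively with that of $G(n) \otimes H_*(P_0)$, so the only obstruction is $A$--module compatibility, and this is available in the stated range.

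In the main range $r \geq n-1$, the identification proceeds by tracing $Q^r(i_n)$ through the subquotient inclusion $\Ext_A^{*,*}(G(\star), M) \hookrightarrow \mathcal R_*(\Sigma^{-1}M)_{\star-1}$ of \propref{starting point prop}. The class $i_n = 1_{G(n)}$ corresponds to the generator $\iota_n$ of $G(n)$ viewed inside $\mathcal R_0(\Sigma^{-1}G(n))_{n-1}$, so $Q^r(i_n)$ is represented by the formal Dyer-Lashof class $Q^r \iota_n \in \mathcal R_1$. The core task is to realize this formal class as a concrete Yoneda extension in $\Ext_A^{1,1}(G(n+r), G(n))$ and match it with the pullback $\delta_{G(n)}(q(n,r))$. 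The guiding principle is that $H_*(P_0)$ is the universal unstable $A$--module supporting a single ``Dyer-Lashof step'' (the cofiber sequence (\ref{basic cofib seq}) encoding precisely this first-order structure), and that $q(n,r)$, defined as the sum of \emph{all} basis elements in the relevant degree, is the most symmetric $A$--module map available in $\Hom_A(G(n+r), G(n) \otimes H_*(P_0))$, matching the free generator $Q^r$ on the $\mathcal R_*$ side.

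The main obstacle lies in making this last matching rigorous. One must compare the Yoneda class representing $Q^r \iota_n$ (coming from the construction of the Dyer-Lashof structure on $\mathcal R_*$ in \cite{kuhn mccarty}) with the pullback extension $\delta_{G(n)}(q(n,r))$. I would approach this by constructing a partial projective resolution of $G(n+r)$ whose first differential incorporates the cofiber sequence $S^{-1} \to P_{-1} \to P_0$, reading off the Yoneda representative of $Q^r\iota_n$ through this resolution, and comparing directly. Alternatively, one can characterize $Q^r(i_n)$ axiomatically --- via naturality with respect to Steenrod operations (Nishida relations) together with a leading-term condition --- and then verify that the pullback extension $\delta_{G(n)}(q(n,r))$ satisfies the same characterization. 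Either way, the detailed combinatorial claim --- that the correct map is the sum of \emph{all} basis elements rather than some proper subset --- is where the technical work concentrates, and it should emerge from the freeness of $Q^r$ as a generator of the Dyer-Lashof algebra acting on $\mathcal R_*(\Sigma^{-1}G(n))$.
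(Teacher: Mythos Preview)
Your proposal has two genuine gaps.

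\textbf{Circularity.} You plan to invoke the corollary to \propref{unstable M prop}, i.e.\ the isomorphism $\delta_{G(n)}$ for $r \geq n-1$, as a tool to prove \thmref{main thm}. But the paper notes explicitly that the proof of \propref{unstable M prop} goes through a more general form of \thmref{main thm}. Concretely, the surjectivity of $\delta_M$ for unstable $M$ is established in \thmref{unstable M thm} \emph{by producing the explicit preimage} $q(x,r) = \sum_j x\chi(Sq^j)\otimes t_{r+j}$ of $\rho(\sigma Q^r(\sigma^{-1}x))$. So you cannot assume the isomorphism is already available; the surjectivity half is precisely what the identification you seek would establish.

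\textbf{Missing computation.} Even granting the isomorphism, your plan for identifying the preimage of $Q^r(\iota_n)$ is only a sketch (``construct a partial projective resolution'', ``characterize axiomatically via Nishida relations''). The paper's argument is entirely different and much more direct. It builds a map $\tilde d_0: M \otimes H_*(P_{-1}) \to \Sigma \mathcal R_1(\Sigma^{-1}M)$ lifting $d_0$, fitting into a diagram (\lemref{ext lemma}) that relates $\delta_M$ to the quotient map $\rho: \Sigma \mathcal R_1(\Sigma^{-1}M) \to \Oinfty_1(\Sigma^{-1}M)$. Using the explicit formula for $d_0$ from \thmref{HRM thm}, one computes $\tilde d_0(x\otimes t_r) = \sum_i \sigma Q^{r+i}(\sigma^{-1}xSq^i)$; this is then inverted using the antipode identity $\sum_{i+j=k}\chi(Sq^j)Sq^i = \delta_{k,0}$ to obtain $\tilde d_0(q(x,r)) = \sigma Q^r(\sigma^{-1}x)$. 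Finally, Milnor's description of $\chi(Sq^j)$ as the sum of all Milnor basis elements in $A^j$ identifies $q(\iota_n,r)$ with the map $q(n,r)$ of the statement. These are the ideas your proposal is missing: the lift $\tilde d_0$, the antipode inversion, and Milnor's formula for $\chi(Sq^j)$.
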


Curiously the proof of \propref{unstable M prop} uses a version of the more specific \thmref{main thm} in its proof.

\begin{addendum} \label{homology addendum} Except in the simple cases when $(n,r) = (0,0)$ or $(1,0)$, one can make the bottom short exact sequence in the last diagram usefully smaller by noting that $q(n,r): G(n+r) \ra G(n) \otimes H_*(P_0)$ factors as 
$$G(n+r) \xra{\tilde q(n,r)} G(n) \otimes H_*(P_1) \hra G(n) \otimes H_*(P_0).$$
\end{addendum}

\subsection{Algebraic results about $Q(n,n-1)$ and $Q(n,n)$}

Two extreme examples of our extensions can be identified in a different way.  Let $p_m: G(m+1) \ra \Sigma G(m)$ be the nonzero $A$-module map, and recall that there are Mahowald short exact sequences (dual to \cite[Prop.2.3.3]{schwartz book})
$$ 0 \ra G(n)\xra{Sq^n \cdot} G(2n) \xra{p_{2n-1}} \Sigma G(2n-1) \ra 0,$$
and isomorphisms
$$  p_{2n}: G(2n+1)\xra{\sim} \Sigma G(2n).$$

\begin{prop} \label{Q(n,n-1) prop}
(a) There is an isomorphism of extensions
\begin{equation*}
\SelectTips{cm}{}
\xymatrix{
0  \ar[r] &  \Sigma^{-1} G(n) \ar[r]^{Sq^n \cdot}& \Sigma^{-1}G(2n) \ar[d]^{\wr} \ar[r]^{p_{2n-1}}&    G(2n-1)\ar[r] & 0
 \\
0  \ar[r] &  \Sigma^{-1} G(n)\ar@{=}[u]  \ar[r]& Q(n,n-1)   \ar[r]&   G(2n-1)\ar@{=}[u] \ar[r] & 0, }
\end{equation*}
where the top row is one desuspension of a Mahowald short exact sequence. \\

(b) There is a pushout of extensions
\begin{equation*}
\SelectTips{cm}{}
\xymatrix{
0  \ar[r] &  \Sigma^{-2} G(n+1) \ar[r]^{Sq^{n+1} \cdot} \ar[d]^{p_{n}} & \Sigma^{-2}G(2n+2) \ar[r]^{p_{2n+1}} \ar[d]&  \Sigma^{-1} G(2n+1) \ar[d]^{\wr} \ar[r] & 0  \\
0  \ar[r] &  \Sigma^{-1} G(n)   \ar[r] & Q(n,n)  \ar[r]&   G(2n)  \ar[r] & 0}
\end{equation*}
where the top sequence is two desuspensions of a Mahowald short exact sequence.  Thus the bottom sequence is isomorphic to the top one if $n$ is even.
\end{prop}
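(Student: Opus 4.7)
The plan is to leverage \thmref{main thm}, which characterizes $Q(n,r)$ as the pullback of the universal sequence $E_{G(n)}$ along $q(n,r)$. Since $r = n-1$ and $r = n$ both satisfy $r \geq n-1$, the corollary to \propref{unstable M prop} guarantees that $\delta_{G(n)}$ is an isomorphism, so any extension in $\Ext_A^{1,1}(G(n+r),G(n))$ is determined uniquely by its classifying map into $G(n) \otimes H_*(P_0)$. To verify each identification, I would construct a morphism of short exact sequences from the Mahowald-based extension into $E_{G(n)}$ whose leftmost column is the identity on $\Sigma^{-1}G(n)$, verify that the induced rightmost column equals $q(n,r)$, and then invoke the universal property of pullbacks together with the Five Lemma to conclude that the given extension is isomorphic to $Q(n,r)$.

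For part (a), I would set up
\begin{equation*}
\SelectTips{cm}{}
\xymatrix{
0 \ar[r] & \Sigma^{-1}G(n) \ar[r]^{Sq^n \cdot} \ar@{=}[d] & \Sigma^{-1}G(2n) \ar[d] \ar[r]^-{p_{2n-1}} & G(2n-1) \ar[d] \ar[r] & 0 \\
0 \ar[r] & \Sigma^{-1}G(n) \ar[r] & G(n) \otimes H_*(P_{-1}) \ar[r] & G(n) \otimes H_*(P_0) \ar[r] & 0
}
\end{equation*}
by specifying the middle vertical at the top-degree generator of $\Sigma^{-1}G(2n)$. Its image should be a class $\beta \in (G(n) \otimes H_*(P_{-1}))_{2n-1}$ with the property that precomposing with $Sq^n \cdot$ yields the canonical element $\iota_n \otimes x_{-1}$, ensuring that the left square commutes; since $\Sigma^{-1}G(2n)$ is free at the top on a single generator, specifying $\beta$ determines the entire $A$-module map. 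The key remaining point is to verify that the induced right vertical agrees with $q(n,n-1)$ on the generator of $G(2n-1)$.

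For part (b), the strategy is the same, but starts from the pushout of the two-desuspended Mahowald short exact sequence for $G(n+1)$ along $p_n$. After identifying $\Sigma^{-1}G(2n+1)$ with $G(2n)$ via the isomorphism $p_{2n}$, I would construct a map from this pushout into $E_{G(n)}$ whose right vertical is $q(n,n)$ and whose left vertical is the identity, and then conclude as above. When $n$ is even, $p_n$ is itself an isomorphism, so the pushout agrees with the top sequence, and the last sentence of (b) is immediate.

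The main obstacle is the combinatorial identification of the induced right vertical with $q(n,r)$, i.e., verifying that the map sends the top generator of $G(n+r)$ to the sum of all basis elements in degree $n+r$ of $G(n) \otimes H_*(P_0)$. By Addendum~\ref{homology addendum} this sum factors through the much smaller submodule $G(n) \otimes H_*(P_1)$, which substantially restricts the range of basis elements that must be considered. Carrying out the identification should then reduce to admissible-basis bookkeeping in $G(2n-1)$ or $G(2n)$, combined with the defining properties of the Mahowald short exact sequence and the right $A$-module structure on $G(n) \otimes H_*(P_1)$.
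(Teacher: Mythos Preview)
Your approach is genuinely different from the paper's, and it is worth contrasting the two.

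You propose to use \thmref{main thm} directly: build a morphism of short exact sequences from the (desuspended) Mahowald sequence into $E_{G(n)}$ with identity on the left, then verify by explicit basis bookkeeping that the induced right vertical equals $q(n,r)$. This is a plausible route, but you have correctly identified that the entire content lies in the step you leave unproved: the combinatorial identification of the right vertical with $q(n,r)$. A secondary issue is your claim that specifying $\beta$ determines the middle vertical: $\Sigma^{-1}G(2n)$ is not an object of $\U$, so the freeness property of $G(2n)$ does not directly apply, and one must check that the prescription extends to an $A$-module map into the non-unstable module $G(n)\otimes H_*(P_{-1})$. This can be arranged (surjectivity of $\delta_{G(n)}$ in \propref{unstable M prop} guarantees \emph{some} morphism of extensions exists), but you would still owe the reader the computation pinning down which one you get.

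The paper bypasses all of this computation. It first observes that the Mahowald sequence is the \emph{unique} nonsplit element of $\Ext_A^1(\Sigma G(2n-1),G(n))$ that becomes split after pullback along $p_{2n-1}$. It then uses the compatibility of the Dyer--Lashof operations with suspension (\corref{Qr commutes with suspension cor}, packaged as \lemref{pushout pullback lemma}) to show that
\[
p_{2n-1}^*\bigl(Q^{n-1}(1_{G(n)})\bigr) \;=\; p_{n*}\bigl(Q^{n-1}(1_{G(n+1)})\bigr) \;=\; 0,
\]
the last equality holding because $Q^{n-1}(1_{G(n+1)})=0$ for degree reasons ($n-1 < n$, below the unstable range). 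Since $Q(n,n-1)$ is visibly nonsplit, the uniqueness characterization forces it to be the Mahowald sequence. Part (b) then follows formally from the same pushout--pullback lemma with $r=n$, using that $p_{2n}$ is an isomorphism and invoking part (a) for $n+1$.

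The payoff of the paper's argument is that no basis computation is needed at all: everything is reduced to a one-line degree check and a categorical uniqueness statement. Your approach would, if completed, give a more explicit picture of the map $q(n,n-1)$ in terms of the Mahowald structure, which might be independently useful, but as written the proposal stops short of the substantive step.
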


Let $h_0 \in \Ext_A^{1,1}(\Z/2,\Z/2)$ denote the nonsplit extension of right $A$--modules
$$ 0 \ra \Sigma \Z/2 \ra G(2) \ra \Sigma^2 \Z/2 \ra 0.$$  ($G(2) = H_*(\mathbb R \mathbb P^2)$.) In the usual way, this induces a natural map
$$ h_0 : \Ext_A^{s,t}(N,M) \ra \Ext_A^{s+1,t+1}(N,M)$$
for all right $A$--modules $M$ and $N$.

We will use \propref{Q(n,n-1) prop}(b) to prove the following theorem.

\begin{thm} \label{squaring thm}  For all right $A$--modules $M$, the following diagram commutes
\begin{equation*}
\SelectTips{cm}{}
\xymatrix{
\Ext_A^{s,s}(G(2n),M) \ar[d]^{Sq^n} \ar[rrr]^-{h_0 } &&& \Ext_A^{s+1,s+1}(G(2n),M) \ar[d]^{Sq^n}  \\
\Ext_A^{s,s}(G(n),M) \ar[urrr]^-{Q^n} \ar[rrr]^-{(n+1)h_0} &&& \Ext_A^{s+1,s+1}(G(n),M). }
\end{equation*}
\end{thm}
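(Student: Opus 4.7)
The plan is to reduce each triangle of the diagram to a single identity in $\Ext^{1,1}_A$, use the pushout presentation of $Q(n,n)$ from \propref{Q(n,n-1) prop}(b), and then carry out an Adem-relation computation whose parity behaviour explains the coefficient $(n+1)$.

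By Yoneda's lemma, testing on $\eta = 1_{G(m)}$ shows that the two triangles commute for all $s$, $M$ and $\eta$ if and only if
\[ (Sq^n\cdot)_*\,Q(n,n) = h_0 \cdot 1_{G(2n)} \text{ in } \Ext^{1,1}(G(2n),G(2n)) \text{ (upper),} \]
\[ (Sq^n\cdot)^*\,Q(n,n) = (n+1)\,h_0 \cdot 1_{G(n)} \text{ in } \Ext^{1,1}(G(n),G(n)) \text{ (lower),} \]
where $(Sq^n\cdot)_*$ and $(Sq^n\cdot)^*$ denote push-forward and pull-back of extensions along $Sq^n\cdot\colon G(n)\ra G(2n)$. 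By \propref{Q(n,n-1) prop}(b), $Q(n,n) = (p_n)_*\phi^*[\Sigma^{-2}M_1]$ where $M_1\colon 0\ra G(n+1)\xra{Sq^{n+1}\cdot} G(2n+2)\xra{p_{2n+1}} \Sigma G(2n+1)\ra 0$ is the Mahowald sequence and $\phi\colon G(2n)\xra{\sim}\Sigma^{-1}G(2n+1)$ is the isomorphism induced by $p_{2n}$. Since push-forward on one side of an extension commutes with pull-back on the other, the lower-triangle class becomes
\[ (Sq^n\cdot)^*Q(n,n) \;=\; (p_n)_*\,g^*[\Sigma^{-2}M_1], \]
where $g=\phi\circ(Sq^n\cdot)\colon G(n)\ra\Sigma^{-1}G(2n+1)$ sends $\iota_n\mapsto\Sigma^{-1}(\iota_{2n+1}Sq^n)$.

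The pullback $g^*[\Sigma^{-2}M_1]$ vanishes exactly when the obvious candidate lift $\iota_n\mapsto\Sigma^{-2}(\iota_{2n+2}Sq^n)$ is $A$-linear, equivalently when $\iota_{2n+2}Sq^nSq^i = 0$ in $G(2n+2)$ for every $i$ with $2i>n$. Expanding by the Adem relation $Sq^nSq^i = \sum_c \binom{i-c-1}{n-2c}\,Sq^{n+i-c}Sq^c$ and using the unstability $\iota_{2n+2}Sq^j=0$ for $j\ge n+2$, the only potentially non-vanishing term is $\iota_{2n+2}Sq^{n+1}Sq^{n/2}$, arising at $i=n/2+1$, $c=n/2$, with binomial coefficient $\binom{0}{0}=1$; this term exists only when $n$ is even. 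This is precisely the $\Ext$-theoretic shadow of the identity $Sq^1Sq^n=(n+1)Sq^{n+1}$ and is the source of the $(n+1)$ in the theorem. For $n$ odd the lift exists, giving $(Sq^n\cdot)^*Q(n,n)=0=(n+1)\,h_0\cdot 1_{G(n)}$. For $n$ even the obstruction is the element $\iota_{n+1}Sq^{n/2}\in G(n+1)$, which $(p_n)_*$ carries to $\Sigma(\iota_nSq^{n/2})\in\Sigma G(n)$. Representing $h_0\cdot 1_{G(n)}$ by the Hopf-algebra tensor extension $0\ra\Sigma^{-1}G(n)\ra\Sigma^{-2}G(2)\otimes G(n)\ra G(n)\ra 0$, the Cartan formula shows that the failure of $\iota_2\otimes\iota_n$ to be $A$-stable under $Sq^{n/2+1}$ is $\iota_2Sq^1\otimes\iota_nSq^{n/2}$, which equals $\Sigma(\iota_nSq^{n/2})$ under the standard inclusion $\Sigma G(n)\hra G(2)\otimes G(n)$; this matches.

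For the upper triangle the parallel computation gives $(Sq^n\cdot)_*Q(n,n)=\phi^*(Sq^n\cdot\circ p_n)_*[\Sigma^{-2}M_1]$, and the middle term of this push-forward extension is the pushout $\Sigma^{-2}G(2n+2)\sqcup_{\Sigma^{-2}G(n+1)}\Sigma^{-1}G(2n)$. The assignment $\Sigma^{-2}\iota_{2n+2}\mapsto\Sigma^{-2}\iota_2\otimes\iota_{2n}$, $\Sigma^{-1}\iota_{2n}\mapsto\Sigma^{-2}(\iota_2Sq^1)\otimes\iota_{2n}$ defines an isomorphism onto $\Sigma^{-2}G(2)\otimes G(2n)$, the middle term of $h_0\cdot 1_{G(2n)}$: the pushout relation $\Sigma^{-2}(\iota_{2n+2}Sq^{n+1})=\Sigma^{-1}(\iota_{2n}Sq^n)$ is exactly the Cartan identity $(\Sigma^{-2}\iota_2\otimes\iota_{2n})\cdot Sq^{n+1}=\Sigma^{-2}(\iota_2Sq^1)\otimes\iota_{2n}Sq^n$, and a dimension count via the Mahowald sequence ($\dim G(2n+2) = \dim G(n+1) + \dim G(2n)$) confirms bijectivity. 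The main obstacle I anticipate is the rigorous passage from matching obstructions at this single critical Adem instance to a genuine equality of $\Ext^{1,1}$-classes in the lower triangle when $n$ is even; while the upper-triangle isomorphism can be adapted to this situation, verifying it respects every higher unstability relation requires care, and may reduce to a computation of the minimal free resolution of $G(n)$.
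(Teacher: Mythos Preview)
Your upper-triangle argument is essentially the paper's: both push out the (twice-desuspended) Mahowald sequence along $Sq^n\cdot\circ p_n$ and identify the result with $G(2)\otimes G(2n)$ via the Cartan-formula identity $(\iota_2\otimes\iota_{2n})Sq^{n+1}=\iota_2Sq^1\otimes\iota_{2n}Sq^n$. The paper phrases this as a commuting square and then invokes the 5-lemma (a map of short exact sequences that is the identity on the ends is an isomorphism in the middle), which is cleaner than your dimension count, but the content is the same.

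The lower triangle is where you diverge, and where your admitted obstacle is genuine. Your obstruction-matching argument shows that the ``obvious'' set-theoretic splittings of $(Sq^n\cdot)^*Q(n,n)$ and of $h_0\cdot 1_{G(n)}$ fail at the same relation $Sq^{n/2+1}$ with the same defect $\sigma\iota_nSq^{n/2}$; but this does not by itself prove equality in $\Ext^1$. To finish you would need either (i) an explicit map of short exact sequences, or (ii) the fact that $\Ext^{1,1}_A(G(n),G(n))\cong\Z/2$ so that nonzero implies equal---neither of which you supply. This can be done, but it is exactly the ``minimal free resolution'' computation you flag as a concern.

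The paper avoids all of this. Since $\Ext_A^{*,*}(G(\star),M)$ is a subquotient of $\mathcal R_*(\Sigma^{-1}M)$ in $\QM$ (\propref{starting point prop}), the Nishida relations hold on the nose, and one has
\[
(Q^nx)Sq^n \;=\; \sum_i \binom{0}{n-2i} Q^i(xSq^i)
\;=\;
\begin{cases} Q^{m}(xSq^{m}) & n=2m,\\ 0 & n\text{ odd.}\end{cases}
\]
When $n=2m$, the already-proven upper triangle (with $n$ replaced by $m$) gives $Q^{m}(xSq^{m})=h_0x$, so $(Q^nx)Sq^n=(n+1)h_0x$ in both parities. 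This two-line reduction replaces your entire lower-triangle computation; the key idea you are missing is that the lower triangle is a \emph{consequence} of the upper one via the Nishida relation, not an independent identity requiring its own extension-level analysis.
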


\subsection{Topological realization}

The (co)fibration sequence (\ref{basic cofib seq}) extends to
$$ S^{-1} \ra P_{-1} \ra P_0 \xra{tr} S^0,$$
where $tr$ is the $\Z/2$--transfer. Smashing this with $T(n)$ yields the sequence
$$ \Sigma^{-1}T(n) \ra T(n) \sm P_{-1} \ra T(n) \sm P_0 \xra{1 \sm tr} T(n).$$

We will see that the key properties of $T(n)$ make it easy to prove the following.

\begin{lem} \label{realizing lem} There exists a map $f(n,r): T(n+r) \ra T(n) \sm P_0 $ such that $f(n,r)_* = q(n,r)$.
\end{lem}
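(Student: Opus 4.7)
The plan is to realize $q(n,r)$ via the characterizing property of the Brown--Gitler dual spectra: for any spectrum $Y$ whose mod $2$ homology is unstable as a right $A$-module, the Hurewicz-type map
\[ [T(m), Y] \lra H_m(Y), \qquad f \mapsto f_*(\iota_m), \]
is surjective.  Equivalently, since $G(m) = H_*(T(m))$ is free unstable on $\iota_m$, every unstable right $A$-module map $G(m) \ra H_*(Y)$ is realized by some spectrum map.  This is the direct topological counterpart of the universal property $\Hom_A^{\text{unst}}(G(m), M) = M_m$ that makes $q(n,r)$ well-defined in the first place.

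First I would verify that $Y = T(n) \sm P_0$ satisfies this hypothesis.  Since $P_0 = \Sinfty \R\PP^{\infty}_+$ is a suspension spectrum, $H_*(P_0)$ is unstable; $G(n) = H_*(T(n))$ is free unstable by construction; and the K\"unneth isomorphism
\[ H_*(T(n) \sm P_0) \cong G(n) \otimes H_*(P_0) \]
exhibits $H_*(T(n) \sm P_0)$ as a tensor product of unstable $A$-modules, hence unstable.

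Next I would apply the realization principle to the distinguished class
\[ x := q(n,r)(\iota_{n+r}) \in \bigl( G(n) \otimes H_*(P_0) \bigr)_{n+r} \]
singled out by the definition of $q(n,r)$, producing a spectrum map $f(n,r): T(n+r) \ra T(n) \sm P_0$ with $f(n,r)_*(\iota_{n+r}) = x$.  Because $G(n+r)$ is free unstable on $\iota_{n+r}$, any unstable $A$-module map out of $G(n+r)$ is determined by its value on this generator; thus $f(n,r)_* = q(n,r)$ automatically.

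The only substantive step is the realization principle when the target is $T(n) \sm P_0$, which I expect to be the main obstacle since $T(n) \sm P_0$ is not itself a suspension spectrum.  I would handle this either by citing the extension of the classical Brown--Gitler theorem to arbitrary spectra with unstable mod $2$ homology (which covers $T(n) \sm P_0$ by the second paragraph), or by exhausting $\R\PP^{\infty}$ by finite skeleta so as to reduce to a compatible family of classical instances $T(n) \sm \Sinfty (\R\PP^{N})_+$ and then passing to the colimit.  Given this input, the rest of the lemma is purely formal.
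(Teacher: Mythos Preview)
Your overall strategy---pick the class $x=q(n,r)(\iota_{n+r})$ and realize it via the Brown--Gitler surjection $[T(n+r),Y]\twoheadrightarrow H_{n+r}(Y)$---is exactly what the paper does.  The divergence is in how you justify that $Y=T(n)\sm P_0$ is an admissible target.

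The paper's Brown--Gitler property (property (2) in \secref{geometric sec}) is stated for \emph{spacelike} targets, not for arbitrary spectra with unstable homology, and the paper invokes property (3)---the Goerss/Lannes theorem that $T(n)$ is itself spacelike---to conclude that $T(n)\sm P_0$ is spacelike (a smash of a retract of a suspension spectrum with a suspension spectrum).  That one line is the whole proof.  Your first proposed route, ``cite the extension of the Brown--Gitler theorem to arbitrary spectra with unstable mod $2$ homology,'' is not something you can cite: unstable homology alone is not known to suffice, and indeed \cite{hunter kuhn} is about \emph{characterizing} which spectra satisfy the Brown--Gitler property, which would be pointless if unstable homology were enough.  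Your second route, filtering by skeleta of $\R\PP^{\infty}$, still needs the targets $T(n)\sm\Sinfty(\R\PP^N)_+$ to be spacelike, i.e.\ it again needs $T(n)$ spacelike---at which point the colimit detour is unnecessary and you are back to the paper's argument.

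So the missing ingredient is precisely the nontrivial input (3): $T(n)$ is spacelike.  Once you add that, your proof collapses to the paper's.
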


We then let $s(n,r): T(n+r) \ra T(n)$ be the composite
$$ T(n+r) \xra{f(n,r)} T(n) \sm P_0  \xra{1 \sm tr} T(n),$$
and define $X(n,r)$ to be the fiber of $s(n,r)$.

It is easy to then conclude that our short exact sequences have been topologically realized.

\begin{thm} \label{realizing thm}  Applying mod 2 homology to the cofibration sequence
$$\Sigma^{-1}T(n) \ra X(n,r) \ra T(n+r)$$ realizes (\ref{Qnr defn}).
\end{thm}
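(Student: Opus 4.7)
The plan is to realize $Q(n,r)$ as $H_*(X(n,r))$ by exhibiting a homotopy pullback square of spectra whose image under $H_*$ is the pullback of extensions characterizing $Q(n,r)$ in \thmref{main thm}.

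Smashing the extended cofibration sequence $S^{-1} \ra P_{-1} \ra P_0 \xra{tr} S^0$ with $T(n)$ yields
$$\Sigma^{-1}T(n) \ra T(n) \sm P_{-1} \ra T(n) \sm P_0 \xra{1 \sm tr} T(n),$$
so $T(n) \sm P_{-1}$ is the fiber of $1 \sm tr$. Because $s(n,r) = (1 \sm tr) \circ f(n,r)$, the standard fact in a stable category that the fiber of a composite $gf$ is the pullback of $\mathrm{fib}(g)$ along $f$ produces a diagram in which each column is a cofibration sequence and the bottom square is a homotopy pullback:
$$\SelectTips{cm}{}
\xymatrix{
\Sigma^{-1} T(n) \ar@{=}[r] \ar[d] & \Sigma^{-1} T(n) \ar[d] \\
X(n,r) \ar[d] \ar[r] & T(n) \sm P_{-1} \ar[d] \\
T(n+r) \ar[r]^-{f(n,r)} & T(n) \sm P_0.
}$$

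Applying $H_*$ turns each column into a short exact sequence of right $A$-modules. By the K\"unneth isomorphism, the right column becomes
$$0 \ra \Sigma^{-1}G(n) \ra G(n) \otimes H_*(P_{-1}) \ra G(n) \otimes H_*(P_0) \ra 0,$$
that is, the extension $E_{G(n)}$ of \propref{unstable M prop}, while the left column becomes $0 \ra \Sigma^{-1} G(n) \ra H_*(X(n,r)) \ra G(n+r) \ra 0$. The top horizontal map is the identity, and by \lemref{realizing lem} the bottom is $f(n,r)_* = q(n,r)$. Since a map of short exact sequences that is the identity on kernels exhibits the domain middle term as the pullback of the codomain along the induced map of cokernels, the left column is the pullback of $E_{G(n)}$ along $q(n,r)$, which by \thmref{main thm} is precisely the extension (\ref{Qnr defn}).

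The geometric content lies entirely in the construction of $f(n,r)$ via \lemref{realizing lem} and in the algebraic identification of \thmref{main thm}; the remainder is formal. The one point worth brief justification is that pulling the fiber sequence $T(n) \sm P_{-1} \ra T(n) \sm P_0 \xra{1\sm tr} T(n)$ back along $f(n,r)$ produces the fiber sequence $X(n,r) \ra T(n+r) \xra{s(n,r)} T(n)$, but this is a general property of the stable category, so I do not anticipate any serious obstacle.
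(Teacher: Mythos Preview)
Your argument is correct and essentially identical to the paper's own proof: both construct the map of cofibration sequences arising from $s(n,r) = (1\sm tr)\circ f(n,r)$, apply homology, and invoke \lemref{realizing lem} together with \thmref{main thm} to identify the resulting extension as $Q(n,r)$. The only cosmetic difference is that the paper draws the diagram with horizontal cofibration rows rather than vertical columns, and you are slightly more explicit about why the induced map of short exact sequences is a pullback.
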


\begin{addendum} Analogous to Addendum (\ref{homology addendum}), except when $(n,r) = (0,0)$ or $(1,0)$, $f(n,r)$ can be chosen to have the form 
$$T(n+r) \xra{\tilde f(n,r)} T(n) \sm P_1 \xra{1 \sm i} T(n) \sm P_0,$$
where $i$ is the standard right inverse of the projection $P_0 \ra P_1$. (The composite $tr \circ i: P_1 \ra S^0$ is the Kahn-Priddy map.)
\end{addendum}

If $T$ and $X$ are spectra, let $E_k^{s,t}(T,X)$ denote the $k$th page of the mod 2 Adams spectral sequence converging to $[T,X]_{t-s}$.   The maps
$$s(n,r): T(n+r) \ra T(n),$$
which are zero in mod 2 homology, induce maps of spectral sequences \cite{bruner} of the form
$$ s(n,r)^*: \{E_k^{s,t}(T(n),X)\} \ra \{E_k^{s+1,t+1}(T(n+r),X)\}$$
which, by the last theorem, agree with our Dyer--Lashof operations when $k=2$.  The next result follows.

\begin{cor} \label{ASS cor} $ \{E_k^{s,t}(T(\star),X) \} \Rightarrow [T(\star),X]_{t-s}$ is a spectral sequence of modules over the Dyer--Lashof algebra.
\end{cor}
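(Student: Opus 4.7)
The plan is to leverage Bruner's framework for maps of Adams spectral sequences induced by stable maps that are trivial on mod 2 homology. The essential inputs are already assembled in the paper: for each pair $(n,r)$, the stable map $s(n,r): T(n+r) \to T(n)$ of \lemref{realizing lem} is zero on mod 2 homology (this is implicit in \thmref{realizing thm}: if $s(n,r)_*$ were nonzero, the cofiber sequence $\Sigma^{-1}T(n) \to X(n,r) \to T(n+r)$ could not give a short exact sequence in homology), and its associated extension class in $\Ext_A^{1,1}(G(n+r),G(n))$ is $Q(n,r) = Q^r(i_n)$ by \thmref{realizing thm} combined with \propref{starting point prop}.

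First, I would invoke Bruner \cite{bruner}: a stable map $f: Y \to Z$ that is zero in mod 2 homology (equivalently, has Adams filtration at least $1$) induces, for each $k \geq 2$, a map of mod 2 Adams spectral sequences
$$f^{*}: E_k^{s,t}(Z,X) \ra E_k^{s+1,t+1}(Y,X)$$
commuting with $d_k$, and converging to the induced map on homotopy groups. The key additional fact in Bruner is that on the $E_2$-page, $f^{*}$ agrees with Yoneda product against the extension class $[f] \in \Ext_A^{1,1}(H_*(Y), H_*(Z))$ represented by the cofiber sequence of $f$. Applying this with $f = s(n,r)$, the $E_2$-page map is Yoneda product with $Q(n,r)$, which by construction equals the algebraic $Q^r$ of \propref{starting point prop}. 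Hence defining $Q^r$ on the spectral sequence by $s(n,r)^{*}$ yields, on $E_2$, precisely the Dyer--Lashof action from the proposition, and analogously for the Steenrod operations $Sq^k$ induced by $a \cdot: G(n) \to G(n+k)$ which are realized by genuine stable maps $T(n+k) \to T(n)$.

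Second, I would check that these operations make each $E_k$ into a module over the Dyer--Lashof algebra. On $E_2$, Adem relations and the Nishida relations hold because they hold for the algebraic action of \propref{starting point prop} on $\Ext_A^{*,*}(G(\star), H_*(X))$. Since the operations commute with the differentials $d_k$, they descend to $E_{k+1} = H(E_k, d_k)$, and relations that hold on $E_k$ are inherited by the subquotient $E_{k+1}$. Convergence of the spectral sequence then endows the associated graded of the filtered abutment $[T(\star), X]_{*}$ with a compatible Dyer--Lashof module structure.

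The main obstacle is essentially bookkeeping: tracing Bruner's identification of $f^{*}$ on $E_2$ with Yoneda product, and confirming the bigrading shifts match the conventions used in \propref{starting point prop}. No deeper argument is needed, since the Adem and Nishida relations are not re-derived here but imported from the $E_2$-level result, and their propagation through the spectral sequence is automatic from compatibility with differentials.
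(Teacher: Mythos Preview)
Your proposal is correct and follows essentially the same approach as the paper: the paper's argument is the short paragraph immediately preceding the corollary, which invokes Bruner \cite{bruner} together with \thmref{realizing thm} to conclude that the spectral sequence maps $s(n,r)^*$ agree on $E_2$ with the algebraic Dyer--Lashof operations. You have correctly spelled out the bookkeeping (that relations on $E_2$ propagate to later pages because the operations commute with differentials) that the paper leaves implicit.
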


\begin{rem}  The map $f(n,r)$ is not unique: one can add any map of positive Adams filtration to any choice of this without changing its effect on mod 2 cohomology.  We have not studied how this effects either $s(n,r)$ or the 2-completed homotopy type of the finite spectra $X(n,r)$.
\end{rem}

\subsection{Organization of the rest of the paper}

In \secref{background section}, we review the results in the literature that imply \propref{starting point prop}.  In particular, algebraic results in \cite{kuhn mccarty} show this, when combined with an old observation of P.Goerss \cite{goerss unstable ext}.   The constructions and ideas here have their origins in work of W.Singer \cite{singer}, as further developed by H.Miller \cite{miller} and  J.Lannes and S.Zarati \cite{lz}, among others.  

With ingredients which appear in this older work, we  prove \propref{unstable M prop} and \thmref{main thm} in \secref{main thm proof sec}.

We prove \propref{Q(n,n-1) prop} and \thmref{squaring thm} in \secref{squaring sec}.  The formulation of \thmref{squaring thm} was inspired by pondering calculations in \cite{Brian Thomas thesis}, and is new in this revised version.

Background on the geometric properties of the spectra $T(n)$ will be reviewed in \secref{geometric sec}, making it easy to verify \lemref{realizing lem} and \thmref{realizing thm}.

In the final section \secref{applications sec}, we discuss applications, both realized and ongoing.  

This paper had its origins in the author's support of the thesis work of Brian Thomas \cite{Brian Thomas thesis}, which used \thmref{main thm} to understand our Dyer-Lashof operations on $\Ext_A^{*,*}(G(\star), \Sigma^2H_*(ku)) = \Ext_{E(1)}^{*,*}(G(\star), \Sigma^2\Z/2)$.  His project has been extended in \cite{RTG paper}.  

We thank the referee of a first version of this paper for suggesting \propref{unstable M prop}, and implicitly encouraging us to stick to homology and right $A$-module actions.

\section{Algebraic background} \label{background section}

\subsection{Some categories of right $A$--modules} \label{categories sec}

We introduce various categories featured in \cite{kuhn mccarty}.

\begin{itemize}
\item $\M$ is the category of locally finite right $\A$--modules.  The Steenrod squares go down in degree: given $x \in M \in \M$, $|x Sq^i| = |x|-i$.  A right $\A$--module $M$ is locally finite if, for all $x \in M$, $x \cdot \A$ is finite dimensional over $\Z/2$.
\item
$\U$ is the full subcategory of $\M$ consisting of locally finite right $\A$--modules satisfying the unstable condition: $xSq^i = 0$ whenever $2i>|x|$.
\item $\Qq$ is the category of graded vector spaces $M$ acted on by Dyer--Lashof operations $Q^r: M_n \ra M_{n+r}$, for $r \in \Z$, satisfying the Dyer-Lashof Adem relations and the unstable condition: $Q^rx = 0$ whenever $r<|x|$.
\item $\QM$ is the full subcategory of $\M \cap \Qq$ consisting of objects whose Dyer--Lashof operations are intertwined with the Steenrod operations via the Nishida relations.
\item $\QU = \QM \cap \U$.
\end{itemize}

As examples, $H_*(X;\Z/2) \in \M$ if $X$ is a spectrum, $H_*(Z;\Z/2) \in \U$ if $Z$ is a space, $H_*(X;\Z/2) \in \QM$ if $X$ is an $E_{\infty}$-ring spectrum, and $H_*(Z;\Z/2) \in \QU$ if $Z$ is an $E_{\infty}$-space, e.g. if $Z = \Oinfty X$. 

The abelian category $\M$ has enough injectives; indeed, any $M \in \M$ embeds in the injective $M \otimes A_*$, where $A_* = H_*(H\Z/2)$ is viewed as a right $A$-module.  

The following is presumably well known, though we don't know of a reference.

\begin{lem} The abelian category $\M$ is equivalent to the abelian category of comodules over the coalgebra $A_*$.
\end{lem}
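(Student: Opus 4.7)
The plan is to construct explicit quasi-inverse functors $F\colon \M \to \mathrm{Comod}_{A_*}$ and $G\colon \mathrm{Comod}_{A_*} \to \M$ that are the identity on underlying graded vector spaces, so that exactness is preserved automatically and the result is an equivalence of abelian categories. To build $F$, observe that for $M \in \M$ and $x \in M$, the local finiteness of $x \cdot A$ combined with the degreewise finite-dimensionality of $A$ forces $x \cdot A_k = 0$ for all but finitely many $k$: each $x \cdot A_k$ is a subspace of $M$ of dimension at most $\dim A_k < \infty$, and a finite-dimensional graded subspace of $M$ occupies only finitely many degrees. Fixing a homogeneous basis $\{a_{k,j}\}$ of each $A_k$ with dual basis $\{a_{k,j}^*\} \subset A_*$, the formula
\[ \psi_M(x) := \sum_{k,j}(x \cdot a_{k,j}) \otimes a_{k,j}^* \]
is a finite sum in $M \otimes A_*$, and its coaction axioms follow by dualizing the right module axioms using that the coproduct on $A_*$ is dual to the product on $A$.

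To build $G$, given $(N, \psi)$, set $x \cdot a := (\mathrm{id} \otimes \mathrm{ev}_a)\psi(x)$, where $\mathrm{ev}_a \colon A_* \to \F_2$ is evaluation at $a$. Writing $\psi(x) = \sum_i x_i \otimes \phi_i$ as a finite sum in $N \otimes A_*$ shows that $x \cdot A$ lies in the finite-dimensional span of the $x_i$, so $N$ with this action belongs to $\M$; associativity of the action follows from coassociativity of $\psi$ via the same duality. The composites $F \circ G$ and $G \circ F$ are identities on underlying graded vector spaces, and one verifies that the two structure maps recover each other by unwinding the definitions and invoking the perfect pairing $A_n \otimes (A_n)^* \to \F_2$ in each degree. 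Since neither functor alters the underlying vector space, a sequence is short exact in $\M$ iff its image is short exact in $\mathrm{Comod}_{A_*}$, so this is an equivalence of abelian categories.

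The main subtlety is ensuring that $\psi_M$ lands in the honest tensor product $M \otimes A_*$ rather than in some completion: this is exactly what the local finiteness hypothesis secures, and it is the reason the statement restricts to $\M$ rather than considering all right $A$-modules.
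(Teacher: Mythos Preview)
Your proof is correct and follows essentially the same approach as the paper's: both pass between right $A$-module structures and $A_*$-coactions via dual homogeneous bases, with local finiteness ensuring the coaction lands in the honest tensor product. The paper's version is labeled a ``Sketch Proof'' and only spells out the comodule-to-module direction (your functor $G$), leaving the rest implicit; your write-up simply fills in the reverse functor $F$ and the inverse/exactness verifications that the paper omits.
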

\begin{proof}[Sketch Proof] Let $\{\xi^I\}$ and $\{Sq^I\}$ be the usual dual homogeneous bases of $A_*$ and $A$.  If $M$ be an $A_*$-comodule with structure map $\Psi: M \ra M \otimes A_*$, then $M$ is also a locally finite right $A$-module as follows: given $x \in M$, $xSq^J = x_J$ if  $\Psi(x) = \sum_I x_I \otimes \xi^I$.  The finiteness of the sum implies that the right $A$--module structure is locally finite.
\end{proof} 

As in the introduction, we write $\Ext_A^{*,*}(M,N)$ for $\Ext_{\M}^{*,*}(M,N)$. Thanks to the lemma, these correspond to the $\Ext$-groups appearing in the most general presentations of the Adams Spectral Sequence.  To compare with some other versions, e.g. the presentation in \cite[Chap.2]{ravenel green book}, one has the following observation.

\begin{lem}  Let $\ModA$ be the category of all right $A$-modules.  Given $M,N \in \M$, the natural map
$$ \Ext^{*,*}_{A}(M,N) \ra \Ext_{\ModA}^{*,*}(M,N)$$
will be an isomorphism if $N$ is bounded below and of finite type.
\end{lem}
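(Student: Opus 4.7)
The plan is to produce an injective resolution $N \ra J^\bullet$ in $\M$ all of whose terms $J^k$ are \emph{also} injective when viewed as objects of $\ModA$. Granting this, both $\Ext$ groups in question are computed as the cohomology of the same complex:
$$ \Ext_\M^{*,*}(M,N) = H^*(\Hom_\M(M,J^\bullet)) = H^*(\Hom_{\ModA}(M,J^\bullet)) = \Ext_{\ModA}^{*,*}(M,N), $$
with the middle equality holding because $\Hom_\M(M,J) = \Hom_{\ModA}(M,J)$ whenever $M,J \in \M$ (any $A$-linear map out of a locally finite source has locally finite image).

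The crucial step, and the main expected obstacle, is to show that for any $V \in \M$ which is bounded below and of finite type, the standard injective $V \otimes A_*$ of $\M$ (with diagonal $A$-action) is also injective in $\ModA$. To do this I would invoke the Hopf-module change-of-basis isomorphism, available because $A_*$ is a graded Hopf algebra with antipode and $V$ is a right $A_*$-comodule: there is an $A$-linear isomorphism
$$ V \otimes A_* \xra{\sim} V^{\flat} \otimes A_*, \quad v \otimes \xi \mapsto \sum v_{(0)} \otimes v_{(1)} \xi, $$
where $V^{\flat}$ denotes the underlying graded vector space of $V$ equipped with \emph{trivial} $A$-action and on the target $A$ acts only on the right factor; the inverse uses the antipode of $A_*$. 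Picking a homogeneous basis $\{v_i\}$ of $V$ then identifies the target with $\bigoplus_i \Sigma^{|v_i|} A_*$. Since $V$ is bounded below and of finite type, only finitely many indices $i$ contribute in each fixed graded degree, so this coproduct agrees with the product $\prod_i \Sigma^{|v_i|} A_*$ in graded right $A$-modules. As $A_*$ is the $\mathbb F_2$-dual of the free left $A$-module $A$ it is injective in $\ModA$, and arbitrary products of injectives are injective, so $V \otimes A_*$ is injective in $\ModA$ as claimed.

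With the key claim in hand, build the resolution inductively: take $J^0 = N \otimes A_*$ with the canonical embedding $N \hra J^0$, set $N^1 = \mathrm{coker}(N \hra J^0)$, then $J^1 = N^1 \otimes A_*$, and so on. The class of bounded-below finite-type modules is preserved under subquotients and under tensoring with $A_*$, so each $N^k$ (hence each $J^k$) again satisfies the hypothesis of the key claim, which accordingly applies at every stage. This produces an injective resolution $N \ra J^\bullet$ simultaneously in $\M$ and in $\ModA$, completing the comparison.
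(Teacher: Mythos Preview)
Your argument is correct and follows exactly the strategy of the paper's proof: build an injective resolution of $N$ in $\M$ using the modules $N^k \otimes A_*$, observe that each cokernel stays bounded below and of finite type, and note that such a resolution is simultaneously injective in $\ModA$, so both $\Ext$ groups are computed by the same $\Hom$ complex. The only difference is that you spell out, via the shear (Hopf-module) isomorphism and the finite-type hypothesis turning the coproduct into a product, \emph{why} $V \otimes A_*$ is injective in $\ModA$, whereas the paper simply asserts this.
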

\begin{proof} If $N$ is bounded below and of finite type, then $N \otimes A_*$ will also be injective when viewed in the category of all right $A$--modules, and the cokernel of the inclusion $N \hra N \otimes A_*$ will again be bounded below and of finite type.  From this one deduces that there exists an injective resolution of $N$ in $\M$ that is also an injective resolution when viewed $\ModA$. The lemma follows.
\end{proof}

\subsection{The modules $G(n)$}

Though one can check that $\M$ doesn't have any projectives, the subcategory $\U$ certainly does.  As in the introduction, $G(n)$ is a projective object in $\U$ satisfying 
\begin{equation} \Hom_{\U}(G(n), M) \simeq M_n,
\end{equation}
for all $M \in \U$.  We let $\iota_n \in G(n)_n$ be the universal `top' class.

The modules $G(n)$ can be described in terms of the Steenrod algebra as follows.

\begin{lem} \label{A and G(n) lem} The is an epimorphism of vector spaces $A^{n-k} \era G(n)_k$ with kernel spanned by the Milnor basis elements $Sq^I$ in $A^{n-k}$ of excess more than $n$.  Furthermore, for all $a \in A$, the diagram
For all $a \in A$, we have commutative diagrams
$$
\SelectTips{cm}{}
\xymatrix{
A^{n-k} \ar[d]^{\cdot a} \ar@{->>}[r] & G(n)_k \ar[d]^{\cdot a}  \\
A^{n-k+|a|} \ar@{->>}[r] & G(n)_{k-|a|}. }
$$
\end{lem}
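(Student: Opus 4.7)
The plan is to read off both assertions from the universal property of $G(n)$ as the free object of $\U$ on a generator $\iota_n$ in degree $n$. Since $\iota_n$ generates $G(n)$, the right-multiplication map $\phi \colon A \to G(n)$, $a \mapsto \iota_n\cdot a$, is a graded surjection; restricting to $A^{n-k}$ gives the claimed surjection $A^{n-k}\twoheadrightarrow G(n)_k$. Associativity of the right action, $(\iota_n \cdot b)\cdot a = \iota_n\cdot(ba)$, immediately gives the commutativity of both diagrams, since $\phi$ simply intertwines right multiplication by $a$ on $A$ with right multiplication by $a$ on $G(n)$.

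To identify the kernel I would use the universal property in the concrete form that $G(n)$ is the quotient of $A$ by the right ideal of relations forced by the unstable axiom $xSq^i = 0$ when $2i > |x|$, applied to $\iota_n$. Iterating on an admissible monomial $Sq^{i_1}\cdots Sq^{i_r}$, the partial product $\iota_n\cdot Sq^{i_1}\cdots Sq^{i_j}$ has degree $n - (i_1+\cdots+i_j)$, so unstability forces vanishing as soon as some $2 i_{j+1}$ exceeds $n - (i_1+\cdots+i_j)$. Admissibility ($i_j \geq 2 i_{j+1}$) then makes the very first stage the binding one: once $2 i_1 \leq n$, an easy induction using $2 i_{j+1} \leq i_j$ together with the previous inequality shows that all later stages are automatically satisfied. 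Hence the admissible Milnor basis elements killed by $\iota_n$ are precisely those with $2 i_1 > n$, which is the stated excess criterion.

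To turn this plausibility argument into a proof, I would define $K \subset A$ to be the span of the admissible monomials with $2 i_1 > n$, check that $K$ is a right $A$-submodule, and verify that the quotient $\bar A = A/K$, regraded so that the class of $1$ sits in degree $n$, lies in $\U$ and is free on that class. The universal property of the free object in $\U$ then identifies $\bar A$ with $G(n)$, and the kernel description falls out on each graded piece.

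The main obstacle is showing that $K$ is closed under right multiplication by arbitrary elements of $A$, i.e.\ that applying Adem relations to products $Sq^{i_1}\cdots Sq^{i_r}\cdot Sq^j$ with $2 i_1 > n$ never produces admissible monomials with leading index $\leq n/2$. This is the right-module analogue of the standard Adem-relation bookkeeping that underlies Schwartz's presentation of the free unstable \emph{left} module $F(n)$, and transfers essentially verbatim. Once this closure is established, freeness of $\bar A$ on its top class in $\U$ is a routine basis-counting argument, and the lemma is immediate.
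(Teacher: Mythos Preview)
The paper gives no proof of this lemma; it simply refers to \cite[Cor.~6.14]{genrepI}, \cite[\S8]{genrepIII}, and the dual of \cite[\S2.5]{schwartz book}. Your attempt to supply a direct argument is reasonable in outline --- surjectivity of $a\mapsto\iota_n a$ and commutativity of the square follow immediately from the universal property, as you say --- but the kernel identification has a genuine gap.

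You work throughout with the admissible (Serre--Cartan) basis of $A$, whereas the lemma is stated for the \emph{Milnor} basis; the phrase ``admissible Milnor basis elements'' conflates the two. This is not cosmetic: the paper later relies on the Milnor-basis description in an essential way, invoking Milnor's formula for $\chi(Sq^j)$ to show that $\iota_n\chi(Sq^j)$ is the sum of all the basis elements of $G(n)_{n-j}$. An argument that only pins down the kernel in the admissible basis leaves that step unavailable. Moreover, the condition you derive for an admissible monomial $Sq^{i_1}\cdots Sq^{i_r}$ to lie in the kernel, namely $2i_1>n$, is \emph{not} the usual Serre--Cartan excess condition $e(I)=2i_1-|I|>n$, so the assertion ``which is the stated excess criterion'' is unjustified whichever basis is intended. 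Your admissible-basis statement may well be true --- it is roughly what one expects from dualizing Schwartz's treatment of $F(n)$ --- but it is a different statement from the one in the lemma, and the passage between the two bases is precisely where the cited references do the work. Finally, note that the right-module instability condition $xSq^i=0$ for $2i>|x|$ is not the literal mirror of the left-module condition $Sq^ix=0$ for $i>|x|$, so the Adem bookkeeping you flag as ``the main obstacle'' does not transfer quite as verbatim as you suggest.
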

For this description see \cite[Cor.6.14]{genrepI} (with interpretation as in \cite[\S 8]{genrepIII}), or alternatively, dualize the discussion in \cite[\S 2.5]{schwartz book}.

It follows that $G(n)$ has a canonical homogeneous basis corresponding to Milnor basis elements of excess at most $n$.

\subsection{The right derived functors of $\Oinfty$} \label{unstable subsec}

Let $\Oinfty: \M \ra \U$ be the right adjoint to the inclusion $\U \hra \M$.

It is easy to see that the natural map $\Oinfty M \ra M$ can be viewed as the inclusion of the maximal unstable submodule of $M$, and this inclusion induces a natural isomorphism in $\U$
\begin{equation} \label{G(n) hom iso}  (\Oinfty M)_{\star}=\Hom_{\U}(G(\star),\Oinfty M) \simeq \Hom_{A}(G(\star),M)
\end{equation}
for all $M \in \M$.

Now let $\Oinfty_s: \M \ra \U$ be the $s$th right derived functor of $\Oinfty$.  The following lemma follows. 

\begin{lem}\cite[Cor.1.9]{goerss unstable ext} \label{ext lem} The isomorphism (\ref{G(n) hom iso}) induces  natural isomorphisms in $\U$
$$ (\Oinfty_s M)_{\star} \simeq  \Ext^s_{A}(G(\star),M) $$
for all $M \in \M$ and $s \geq 0$.
\end{lem}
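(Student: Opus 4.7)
The plan is to chase an injective resolution of $M$ in $\M$ through both sides of the asserted isomorphism, exploiting that $G(n)$ is projective in $\U$, so that $\Hom_{\U}(G(n),-)$ is an exact functor on $\U$.

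Concretely, pick an injective resolution $0 \ra M \ra I^0 \ra I^1 \ra \cdots$ in $\M$ (which exists since $\M$ has enough injectives). By definition of right derived functors, $\Oinfty_s M = H^s(\Oinfty I^\bullet)$. Since $G(n)$ is projective in $\U$, the functor $\Hom_{\U}(G(n),-)$ commutes with the formation of cohomology. Combined with the adjunction isomorphism (\ref{G(n) hom iso}) applied termwise to $\Oinfty I^\bullet$, this yields
$$ (\Oinfty_s M)_n \;=\; \Hom_{\U}(G(n), \Oinfty_s M) \;\simeq\; H^s\bigl(\Hom_{\U}(G(n), \Oinfty I^\bullet)\bigr) \;\simeq\; H^s\bigl(\Hom_{A}(G(n), I^\bullet)\bigr),$$
and the final complex computes $\Ext^s_A(G(n), M)$ because $I^\bullet$ is an injective resolution of $M$ in $\M$.

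Naturality in $\star$ follows by Yoneda: the identification $\Hom_\U(G(n), N) \simeq N_n$ is natural in $n$, and under it the Steenrod action on the unstable module $\Oinfty_s M$ corresponds to precomposition with the $A$-module maps $a \cdot : G(n) \ra G(n+|a|)$ recalled in the introduction. Thus the degree-wise isomorphism promotes to an isomorphism of objects in $\U$, as required.

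I do not expect a serious obstacle here. The argument is a degenerate instance of a Grothendieck composition-of-derived-functors spectral sequence: the outer derived functor is $\Oinfty_s$, while the inner functor $\Hom_{\U}(G(n),-)$ is already exact on $\U$, so everything collapses. The only mild subtlety is that one must perform the resolution in $\M$ rather than $\U$ (since it is $\M$ that is known to have enough injectives), and then transfer the computation across the adjunction; this is exactly what the interplay of (\ref{G(n) hom iso}) with exactness of $\Hom_\U(G(n),-)$ accomplishes.
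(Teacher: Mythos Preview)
Your argument is correct and is the standard way to unwind this lemma. The paper itself does not supply a proof: it merely observes that the statement follows from (\ref{G(n) hom iso}) and cites \cite[Cor.~1.9]{goerss unstable ext}. Your chase through an injective resolution in $\M$, using that $\Hom_{\U}(G(n),-) = (-)_n$ is exact on $\U$, is exactly what underlies that citation, so there is nothing to compare.
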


\subsection{The free functor $\mathcal R_*$}

\begin{defn} \label{R def}
Let $\mathcal R_*: \M \ra \QM$ be left adjoint to the forgetful functor.  Explicitly,
$\mathcal R_* M = \bigoplus_{s=0}^{\infty} \mathcal R_s M$ where $\mathcal R_s: \M \ra \M$ is given by
$$\mathcal R_sM = \langle Q^Ix \ | \ l(I) = s,  x \in M\rangle/(\text{unstable and Adem relations}).$$
Here, if $I = (i_1,\dots,i_s)$, $Q^Ix = Q^{i_1}\cdots Q^{i_s}x$, and $l(I) = s$.

The right $A$-module structure on $\mathcal R_*M$ is determined by the right $A$-module structure on $M$ toegether with the Nishida relations.
\end{defn}

There are natural transformations $\mu: \mathcal R_s \mathcal R_t M \ra \mathcal R_{s+t}M$, and $\eta: M = \mathcal R_0M\hra \mathcal R_* M$, making $\mathcal R_*: \M \ra \M$ into a monad on $\M$, and $\QM$ is precisely the category of $\mathcal R_*$--algebras.  

Given $M \in \QM$, the structure map $\mathcal R_1 M \ra M$ corresponds to Dyer--Lashof operations $Q^r: M_n \ra M_{n+r}$ in the evident way.

By inspection, one sees the following.

\begin{lem}  $\mathcal R_*$ is exact.
\end{lem}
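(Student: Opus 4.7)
The plan is to reduce exactness of $\mathcal R_*$ to the exactness of truncation and shift on graded vector spaces, via the classical admissible monomial basis for allowable Dyer-Lashof modules.

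First, I would recall the standard theorem that, for every $M \in \M$, the Dyer-Lashof Adem relations together with the unstable condition $Q^r x = 0$ for $r < |x|$ equip $\mathcal R_s M$ with the vector-space basis
$$\{Q^I x : I = (i_1,\ldots,i_s) \text{ admissible},\ x \in \mathcal B_M,\ i_s \geq |x|\},$$
where $\mathcal B_M$ is any chosen homogeneous basis of $M$ and admissibility means $i_j \geq 2 i_{j+1}$ for all $j < s$. (Admissibility together with $i_s \geq |x|$ automatically forces the unstable condition to hold at every intermediate application of a $Q^{i_j}$: if $i_s \geq |x|$ and $i_{j} \geq 2 i_{j+1}$, then inductively $i_j \geq |Q^{i_{j+1}} \cdots Q^{i_s} x|$, so no further constraint is needed.)

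Second, I would observe that this description is natural in $M$: given a short exact sequence $0 \to M' \to M \to M'' \to 0$ in $\M$, one can choose $\mathcal B_{M'}$, extend it to $\mathcal B_M$, and use the images in $M''$ to form $\mathcal B_{M''}$. The admissible monomial basis then gives a natural isomorphism of functors $\M \to (\text{graded vector spaces})$,
$$\mathcal R_s M \;\cong\; \bigoplus_{I \text{ admissible},\, l(I)=s} \Sigma^{|I|}(M_{\leq i_s}),$$
where $M_{\leq i_s}$ is the truncation of $M$ to degrees $\leq i_s$ and $|I| = i_1 + \cdots + i_s$. Each summand is exact in $M$ (truncation and shift of graded vector spaces are exact, since exactness is checked degreewise), and a direct sum of exact functors is exact. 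Because exactness in $\M$ coincides with exactness of the underlying graded vector spaces, it follows that $\mathcal R_s$ is exact for each $s$, and therefore so is $\mathcal R_* = \bigoplus_s \mathcal R_s$.

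The only nontrivial ingredient is the admissible basis theorem itself, which is classical. The rest is bookkeeping — in particular, the naturality of the decomposition along a short exact sequence is handled by the compatible choice of bases above.
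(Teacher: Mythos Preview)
The paper does not actually prove this lemma: it is stated with the preface ``By inspection, one sees the following,'' and no argument is given. Your approach --- writing $\mathcal R_s M$, as a graded vector space, as a direct sum over admissible sequences $I$ of shifted truncations $\Sigma^{|I|} M_{\leq c(I)}$, and then observing that shift and degreewise truncation are exact --- is exactly the standard way to make this ``inspection'' precise, so your proposal is correct in substance.

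One caveat on conventions: with the paper's ordering $Q^I x = Q^{i_1}\cdots Q^{i_s} x$ and the usual Dyer--Lashof Adem relations at $p=2$, \emph{admissible} means $i_j \leq 2 i_{j+1}$ (not $\geq$), and the allowable condition for $Q^I x \neq 0$ is the excess bound $e(I) := i_1 - i_2 - \cdots - i_s \geq |x|$ (not $i_s \geq |x|$). Your parenthetical inductive check is the mirror image of the correct one: from $i_{j} \leq 2 i_{j+1}$ one deduces $i_{j+1} - (i_{j+2}+\cdots+i_s) \geq i_j - (i_{j+1}+\cdots+i_s)$, so the intermediate excesses are all $\geq e(I) \geq |x|$. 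This does not affect your exactness argument at all --- you still get $\mathcal R_s M \cong \bigoplus_I \Sigma^{|I|} M_{\leq e(I)}$ as graded vector spaces --- but you should fix the inequalities before using this elsewhere.

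A small stylistic point: the isomorphism $\bigoplus_I \Sigma^{|I|} M_{\leq e(I)} \to \mathcal R_s M$, $\sigma^{|I|} x \mapsto Q^I x$, is genuinely natural in $M$ (it does not depend on a basis), so you do not need the compatible-basis maneuver; the admissible basis theorem just certifies that this natural transformation is an isomorphism.
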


\begin{defn}  Define $\epsilon: \Sigma \mathcal R_*(M) \ra \mathcal R_*(\Sigma M)$ by $\epsilon(\sigma Q^Ix) = Q^I \sigma x$. This is a natural transformation between functors with values in $\QM$. 
\end{defn}

\subsection{Dyer-Lashof operations on $\Oinfty_s$ and the proof of \propref{starting point prop}}

Now we recall key algebraic results from \cite{kuhn mccarty}.

The next theorem is a restatement of \cite[Thm.1.16]{kuhn mccarty}. As is said in that paper, it is a variant of theorems in \cite{goerss unstable ext}, \cite{lz}, and \cite{powell}, all inspired by \cite{singer}.

\begin{thm} \label{HRM thm} {\bf (a)} \ The formula
$$ d_s(Q^I\sigma^{-1}(x)) = \sum_{i\geq 0} Q^IQ^{i-1}(xSq^i)$$
induces a well defined natural transformation 
$$ d_s: \mathcal R_s(\Sigma^{-1}M) \ra \mathcal R_{s+1}(M)$$
such that $d_*: \mathcal R_*(\Sigma^{-1}M) \ra \mathcal R_{*+1}(M)$ is a map in $\QM$.

\noindent{\bf (b)} \ The composite $\mathcal R_{s-1}(\Sigma^{-2}M) \xra{d_{s-1}} \mathcal R_s(\Sigma^{-1}M) \xra{d_s} \mathcal R_{s+1}(M)$ is zero, and the homology in the middle is naturally isomorphic to $\Sigma^{-1}\Oinfty_s(\Sigma^{-s}M)$.
\end{thm}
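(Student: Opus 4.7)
The plan is to prove (a) by reducing all checks to the generating case $s=0$, to prove $d^2=0$ in (b) by a direct computation using the Adem and Nishida relations in tandem, and to identify the homology via a universal $\delta$-functor argument.

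For (a), I would first define $d_0:\Sigma^{-1}M\to\mathcal R_1(M)$ by $d_0(\sigma^{-1}x)=\sum_{i\geq 0}Q^{i-1}(xSq^i)$. The sum is finite because $M\in\M$ is locally finite, and each summand lies in $\mathcal R_1(M)$. The central check is $A$-linearity: the identity $d_0(\sigma^{-1}(xSq^k))=d_0(\sigma^{-1}x)\cdot Sq^k$ must be verified by expanding the left side via Adem relations on $Sq^iSq^k$ in $A$ and the right side via Nishida relations on each term $Q^{i-1}(xSq^i)\cdot Sq^k$, then matching coefficients. Granting $A$-linearity of $d_0$, the formula $d_s(Q^I\sigma^{-1}x):=Q^I d_0(\sigma^{-1}x)$ is forced, and well-definedness on the unstable and Adem relations defining $\mathcal R_s(\Sigma^{-1}M)$ is immediate from the universal property of $\mathcal R_*$ as left adjoint to the forgetful functor $\QM\to\M$. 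By construction $d_*$ intertwines the $Q^j$ action, so it is a morphism in $\QM$.

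For $d_{s+1}\circ d_s=0$ in (b), the $\QM$-morphism property reduces the claim to checking $d_1(d_0(\sigma^{-2}x))=0$ for $x\in M$. Expanding gives
$$d_1 d_0(\sigma^{-2}x)=\sum_{i,j\geq 0}Q^{j-1}Q^{i-1}(xSq^iSq^j),$$
and applying the Adem relations for $Sq^iSq^j$ in $A$ together with the Dyer-Lashof Adem relations for $Q^{j-1}Q^{i-1}$ produces matching cancellations of Kudo-type binomial coefficients. For the homology identification, my approach is a universal $\delta$-functor comparison: for each fixed $s$, both $M\mapsto H^s$ of the given complex and $M\mapsto\Sigma^{-1}\Oinfty_s(\Sigma^{-s}M)$ are cohomological $\delta$-functors on $\M$, using exactness of $\mathcal R_*$ and of $\Sigma^{\pm 1}$ for the first and \lemref{ext lem} in the guise of right derived functors for the second. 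At $s=0$, inspection shows $\sigma^{-1}x\in\ker d_0$ iff $x\in\Oinfty M$, since for unstable $x$ the terms with $2i\leq|x|$ vanish by the unstable condition on $\mathcal R_1$ and the terms with $2i>|x|$ vanish by unstability on $M$; conversely the basis elements $Q^{i-1}(xSq^i)$ with $2i>|x|$ are linearly independent, so their sum can vanish only if each does. Both functors thus reduce at $s=0$ to $\Sigma^{-1}\Oinfty M$. To finish by universality, one must show the complex is acyclic on injectives of $\M$; using the equivalence between $\M$ and $A_*$-comodules, it suffices to exhibit a functorial contracting homotopy on cofree modules of the form $N\otimes A_*$, dual to the standard Singer-construction homotopy.

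The hardest step is the acyclicity on injectives, which is essentially the content of the classical results of \cite{singer}, \cite{lz}, \cite{goerss unstable ext}, and \cite{powell}. Since the statement is explicitly a restatement of \cite[Thm.1.16]{kuhn mccarty}, the cleanest route is to quote the parallel acyclicity from one of those sources after reconciling conventions. The only genuinely new piece of bookkeeping is the verification of $A$-linearity of $d_0$, which is the chain-level identity underlying the whole construction.
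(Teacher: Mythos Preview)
Your proposal is correct and follows essentially the same strategy as the paper. The paper does not prove this theorem; it cites it as a restatement of \cite[Thm.1.16]{kuhn mccarty} and only sketches the argument in a remark: form the cochain complex $R_s(M)=\Sigma\mathcal R_s(\Sigma^{s-1}M)$, check $H_0(R_*(M))\simeq\Oinfty M$, and verify acyclicity on the injectives $\Sigma^n A_*$ --- exactly your universal $\delta$-functor outline, with the same hard step and the same citations.
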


\begin{rem}  The isomorphism of this theorem is proved in a roundabout way.  One lets  $R_{*}(M)$ be the cochain complex in $\M$ defined by letting  $R_s(M) = \Sigma \mathcal R_s(\Sigma^{s-1}M)$.  One then shows that $H_s(R_{*}(M)) \simeq \Oinfty_{s}M$ by showing that $R_{*}$ is exact (obvious), that $H_0(R_*(M)) \simeq \Oinfty M$ (easy to check), and finally that $H_s(R_*(\Sigma^n A_*))=0$ for all $s>0$ and all $n \in \Z$.
\end{rem}

\begin{cor} {\bf (a)} If we let $F_s(M) = \Sigma^{-1}\Oinfty_s(\Sigma^{-s}M)$, then $F_*(M)$ is a subquotient of $\mathcal R_*(\Sigma^{-1}M)$ as functors from $\M$ to $\QM$.

{\bf (b)} A short exact sequence $0 \ra M_1 \xra{i} M_2 \xra{j} M_3 \ra 0$ in $\M$ induces an exact triangle in $\QM$: 
\begin{equation*}
\SelectTips{cm}{}
\xymatrix{
 F_*(M_1)  \ar[rr]^{i_*} && F_*(M_2) \ar[dl]^{j_*}  \\
& F_*(M_3), \ar[ul]^{\delta} &  }
\end{equation*}
with $\delta$ raising $*$-degree by 1. Thus $i_*$, $j_*$, and $\delta$ all commute with Dyer-Lashof operations.

\end{cor}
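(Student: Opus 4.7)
Theorem~\ref{HRM thm}(b) is really the statement that $F_s(M)$ is the cohomology at the middle term of the three-term $\QM$-complex
\[\mathcal R_{s-1}(\Sigma^{-2}M) \xra{d_{s-1}} \mathcal R_s(\Sigma^{-1}M) \xra{d_s} \mathcal R_{s+1}(M),\]
where the left differential is Theorem~\ref{HRM thm}(a) applied to $\Sigma^{-1}M$. Since Theorem~\ref{HRM thm}(a) tells us that these differentials are $\QM$-morphisms, both $\ker(d_s)$ and $\IM(d_{s-1})$ are $\QM$-subobjects of $\mathcal R_s(\Sigma^{-1}M)$, and hence the quotient $F_s(M)=\ker(d_s)/\IM(d_{s-1})$ is naturally a subquotient of $\mathcal R_s(\Sigma^{-1}M)$ in $\QM$. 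Naturality in $M$ is inherited from that of $d_*$, giving (a).

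\textbf{Part (b).} My plan is to apply the composite $\mathcal R_s\circ\Sigma^{-k}$, which is exact as a composite of exact functors (using the lemma just preceding the corollary), to the short exact sequence $0\to M_1\to M_2\to M_3\to 0$, for each of $k=2,1,0$. This produces short exact sequences at each of the three terms of the defining complex for each $F_s(M_i)$; by naturality of $d_*$ they assemble into a commutative $3\times 3$ $\QM$-diagram with short-exact columns. A double application of the snake lemma at the middle row yields the exactness of $F_s(M_1)\to F_s(M_2)\to F_s(M_3)$, together with connecting morphisms into kernels and cokernels of the adjacent differentials. Splicing these short pieces across $s$, using that the output at the right end of the complex for $F_s$ feeds via the next differential $d_{s+1}$ into the complex for $F_{s+1}$, produces the connecting map $\delta\colon F_s(M_3)\to F_{s+1}(M_1)$ of the exact triangle. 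Because every differential in sight is a $\QM$-morphism by Theorem~\ref{HRM thm}(a), the resulting $\delta$ automatically lies in $\QM$, which yields the final sentence of the corollary.

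\textbf{Main obstacle.} The delicate step is exactly this cross-level splicing: the three-term complex whose middle cohomology is $F_s(M)$ and the one whose middle cohomology is $F_{s+1}(M)$ have different terms, involving different suspensions of $M$, so the snake-lemma cokernel at level $s$ (which lives in $\mathcal R_{s+1}(M_1)$) does not agree on the nose with $F_{s+1}(M_1)\subseteq\mathcal R_{s+1}(\Sigma^{-1}M_1)$. Identifying how these levels link together, presumably via a careful analysis of the diagonals in the bigraded object $\{\mathcal R_s(\Sigma^t M)\}_{s,t}$ on which $d_*$ acts, is the heart of the argument. Once this identification is in hand, exactness at each vertex of the triangle and compatibility of $\delta$ with Dyer--Lashof operations are formal consequences of Theorem~\ref{HRM thm}(a).
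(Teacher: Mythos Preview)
Your argument for (a) is exactly right and is what the paper intends: since $d_*$ is a $\QM$-map by \thmref{HRM thm}(a), both $\ker d_*$ and $\IM d_*$ (the latter applied with $\Sigma^{-1}M$ in place of $M$) are sub-$\QM$-objects of $\mathcal R_*(\Sigma^{-1}M)$, so their quotient $F_*(M)$ inherits a $\QM$-structure as a subquotient. The paper states the corollary without proof, treating it as immediate from the theorem.

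For (b), your proposal is incomplete, and the obstacle you flag is genuine rather than cosmetic. The snake-lemma connecting map produced by your $3\times 3$ diagram lands in a quotient of $\mathcal R_{s+1}(M_1)$, whereas $F_{s+1}(M_1)$ is a subquotient of $\mathcal R_{s+1}(\Sigma^{-1}M_1)$; these are different objects. Your suggested bridge, ``the output at the right end of the complex for $F_s$ feeds via the next differential $d_{s+1}$ into the complex for $F_{s+1}$'', does not work as written: $d_{s+1}$ applied to $\mathcal R_{s+1}(M)$ lands in $\mathcal R_{s+2}(\Sigma M)$, moving the suspension in the wrong direction. In the $\Ext$ language this is the same mismatch: the ordinary derived-functor boundary for a short exact sequence raises $s$ in $\Ext^{s,t}_A(G(\star),-)$ while keeping $t$ fixed, but passing from $F_s$ to $F_{s+1}$ corresponds to raising both $s$ and $t$. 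The paper does not prove (b) either; a complete argument requires an extra identification linking adjacent suspension levels---for instance via the genuine cochain complex $R_*(N)=\Sigma\mathcal R_*(\Sigma^{*-1}N)$ of the remark after \thmref{HRM thm} together with the $\epsilon$--$d$ compatibility recorded in the lemma just below the corollary---which you have correctly isolated as the missing step but have not carried out.
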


\begin{proof}[Proof of \propref{starting point prop}] This is just statement (a) of the corollary, using that $ \Ext_A^{*,*}(G(\star),M)= \Oinfty_*(\Sigma^{-*}M)_{\star} = F_*(M)_{\star -1}$. 
\end{proof}

Translated into a statement about our Dyer Lashof operations
$$ Q^r: \Ext_A^{s,s}(G(n),M) \ra \Ext_A^{s+1,s+1}(G(n+r),M),$$
statement (b) of the corollary tells us that these commute with dimension shifting, i.e. splicing with short exact sequences. Yoneda's lemma then tell us that these operations all arise from splicing with 
the elements 
$$Q(n,r) = Q^r(1_{G(n)}) \in \Ext_{A}^{1,1}(G(n+r),G(n)),$$
which we regard as short exact sequences
$$ 0 \ra \Sigma^{-1} G(n) \ra Q(n,r) \ra G(n+r) \ra 0,$$
as in \secref{introduction}.
 
We need a couple other properties of the differentials $d_s$.

\begin{lem}\cite[Lemma 4.20]{kuhn mccarty} For all $M \in \M$, the diagram
\begin{equation*}
\SelectTips{cm}{}
\xymatrix{
\Sigma \mathcal R_s(\Sigma^{-1} M) \ar[d]^{\epsilon} \ar[r]^-{\Sigma d_s} & \Sigma \mathcal R_{s+1}(M) \ar[d]^{\epsilon}  \\
\mathcal R_s(M) \ar[r]^-{d_s} & \mathcal R_{s+1}(\Sigma M) }
\end{equation*}
commutes.
\end{lem}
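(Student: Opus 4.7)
The plan is a direct chase on a generic generator, using only the explicit formulas for $\epsilon$ and $d_s$ recalled in the text. Fix $x \in M$ and a length $s$ sequence $I$, and consider the element $\sigma\bigl(Q^I \sigma^{-1}(x)\bigr) \in \Sigma \mathcal R_s(\Sigma^{-1}M)$. By \thmref{HRM thm}(a), $d_s$ is well defined on all of $\mathcal R_s(\Sigma^{-1}M)$, so it is enough to verify commutativity on such generators; no Adem or Nishida bookkeeping beyond the definitions is required.

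Going across and then down: $\Sigma d_s$ sends $\sigma\bigl(Q^I \sigma^{-1}(x)\bigr)$ to $\sigma \sum_{i \ge 0} Q^I Q^{i-1}(x Sq^i)$, and $\epsilon:\Sigma\mathcal R_{s+1}(M) \to \mathcal R_{s+1}(\Sigma M)$ then carries each summand $\sigma Q^I Q^{i-1}(xSq^i)$ to $Q^I Q^{i-1}\sigma(xSq^i)$. Going down and then across: $\epsilon:\Sigma\mathcal R_s(\Sigma^{-1}M)\to \mathcal R_s(M)$ first sends $\sigma\bigl(Q^I \sigma^{-1}(x)\bigr)$ to $Q^I(\sigma\sigma^{-1}(x)) = Q^I x \in \mathcal R_s(M)$. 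Now re-view $M$ as $\Sigma^{-1}(\Sigma M)$, so that $x = \sigma^{-1}(\sigma x)$, and apply $d_s: \mathcal R_s(M) \to \mathcal R_{s+1}(\Sigma M)$ to get $\sum_{i \ge 0} Q^I Q^{i-1}\bigl((\sigma x)Sq^i\bigr)$. Because the right $A$-action on $\Sigma M$ is defined precisely so that $\sigma$ is $A$-linear, we have $(\sigma x)Sq^i = \sigma(xSq^i)$, and the two expressions match term by term.

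The ``main obstacle'' is therefore purely one of bookkeeping: one must remember to interpret the bottom $d_s$ as the differential for input module $\Sigma M$ (not $M$), and to use the tautological $A$-equivariance of suspension when comparing $(\sigma x)Sq^i$ with $\sigma(xSq^i)$. Once this is done, the commutativity is a formal consequence of the definitions of $\epsilon$ and $d_s$.
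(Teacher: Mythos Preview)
Your verification is correct: the diagram commutes by a direct chase on generators $\sigma(Q^I\sigma^{-1}x)$ using only the defining formulae for $\epsilon$ and $d_s$, together with the $A$-linearity of $\sigma$. The paper does not supply its own proof of this lemma---it simply cites \cite[Lemma 4.20]{kuhn mccarty}---so your argument is exactly the expected one and there is nothing further to compare.
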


\begin{cor} \label{Qr commutes with suspension cor} The natural map $\epsilon: \Sigma F_*(M) \ra F_*(\Sigma M)$ is a map in $\QU$. Thus Dyer Lashof operations commute with $\epsilon$.
\end{cor}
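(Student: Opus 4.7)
My plan is to deduce the corollary essentially formally from the preceding lemma together with the chain-level description of $\epsilon$; no substantive computation is required.

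First I would verify that the chain-level map $\epsilon: \Sigma \mathcal R_*(N) \to \mathcal R_*(\Sigma N)$, defined by $\sigma Q^I x \mapsto Q^I \sigma x$, is $\QM$-equivariant. Endowing $\Sigma N$ with the natural Dyer-Lashof structure $Q^r(\sigma y) = \sigma Q^r y$, one has
$$\epsilon(Q^r(\sigma Q^I x)) = \epsilon(\sigma Q^{(r,I)} x) = Q^{(r,I)} \sigma x = Q^r \epsilon(\sigma Q^I x),$$
and compatibility with the Steenrod action follows similarly from the Nishida relations. This is what the definition means when it calls $\epsilon$ a natural transformation between functors valued in $\QM$.

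Next, specializing to $N = \Sigma^{-1}M$, the preceding lemma says precisely that $\epsilon$ intertwines $\Sigma d_*$ with $d_*$, so it is a map of cochain complexes from $(\Sigma \mathcal R_*(\Sigma^{-1}M), \Sigma d_*)$ to $(\mathcal R_*(M), d_*)$. By \thmref{HRM thm}(b), the homology of these complexes in degree $s$ is $\Sigma F_s(M)$ and $F_s(\Sigma M)$ respectively, so $\epsilon$ descends to a natural map on subquotients $\epsilon: \Sigma F_*(M) \to F_*(\Sigma M)$. Because the chain-level $\epsilon$ commutes with the full $\QM$-action, so does the induced map; and since $F_*$ lands in $\QU$, the induced map is a morphism in $\QU$. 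The second sentence of the corollary is merely the unpacking of what this statement means for Dyer-Lashof operations.

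All the substantive work lies in the preceding chain-level lemma; the rest is bookkeeping from the explicit formula $\epsilon(\sigma Q^I x) = Q^I \sigma x$, so I anticipate no real obstacle — in fact the hardest part is just keeping the two different differentials (namely $\Sigma d_*$ and $d_*$) straight while invoking \thmref{HRM thm}(b) for both complexes simultaneously.
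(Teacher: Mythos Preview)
Your argument is correct and matches the paper's intent: the corollary is stated without proof, as an immediate consequence of the preceding lemma together with the fact (already recorded in the definition of $\epsilon$) that $\epsilon$ is a natural transformation of $\QM$-valued functors. Your phrasing of $(\Sigma\mathcal R_*(\Sigma^{-1}M),\Sigma d_*)$ as a single cochain complex is slightly loose, since $d_s$ lands in $\mathcal R_{s+1}(M)$ rather than $\mathcal R_{s+1}(\Sigma^{-1}M)$, but the intended meaning---applying the lemma at each $s$ (with suitable suspensions of $M$) to obtain commuting squares and hence an induced $\QM$-map on the subquotients $F_s$---is clear and correct.
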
 
Translated into a statement about $\Ext$--groups, this last statement means that the diagram
\begin{equation} \label{epsilon and Ext diagram}
\SelectTips{cm}{}
\xymatrix{
 \Ext_A^{s,s}(G(n),M) \ar[r]^-{Q^r} \ar@{=}[d] &  \Ext_A^{s+1,s+1}(G(n+r),M) \ar@{=}[d] \\
 \Ext_A^{s,s}(\Sigma G(n), \Sigma M) \ar[d]^{p^*_{n}} & \Ext_A^{s+1,s+1}(\Sigma G(n+r),\Sigma M) \ar[d]^{p^*_{(n+r)}} \\
 \Ext_A^{s,s}(G(n+1),\Sigma M) \ar[r]^-{Q^r} & \Ext_A^{s+1,s+1}(G(n+r+1),\Sigma M), }
\end{equation}
commutes, for all $M \in \M$. 

\begin{lem} \cite[Lemma 4.33]{kuhn mccarty} If $M \in \U$, then $d_s: \mathcal R_s(\Sigma^{-1}M) \ra \mathcal R_{s+1}(M)$ is zero.
\end{lem}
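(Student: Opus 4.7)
The plan is to prove the lemma by direct inspection of the defining formula for $d_s$, observing that the unstability condition on $M$ is exactly complementary to the unstability condition built into $\mathcal R_*$.

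First, since $d_s$ is natural and $\mathcal R_s(\Sigma^{-1}M)$ is generated (as a vector space, modulo the Adem and unstable relations) by elements of the form $Q^I \sigma^{-1}(x)$ with $x \in M$ and $l(I) = s$, it suffices to show that each term in the sum
$$ d_s(Q^I\sigma^{-1}(x)) = \sum_{i\geq 0} Q^IQ^{i-1}(xSq^i)$$
vanishes in $\mathcal R_{s+1}(M)$. So fix $x \in M$ with $|x| = n$ and fix an index $i \geq 0$.

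Next, I would argue by cases on the size of $i$ relative to $n$. If $2i > n$, then the unstability hypothesis on $M \in \U$ forces $xSq^i = 0$, so the term is trivially zero. If instead $2i \leq n$, then $xSq^i \in M$ has degree $n - i$, and
$$ (i-1) < i \leq n - i = |xSq^i|, $$
so the unstable relation in $\mathcal R_*$ (namely $Q^r y = 0$ whenever $r < |y|$) forces $Q^{i-1}(xSq^i) = 0$, and hence $Q^I Q^{i-1}(xSq^i) = 0$ as well. Since the two cases cover all $i \geq 0$ (with the $i = 0$ case handled by the second, as $Q^{-1}x = 0$ whenever $|x| \geq 0$), every summand vanishes and $d_s(Q^I \sigma^{-1} x) = 0$.

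The main (and really only) conceptual step is recognizing the complementary nature of the two unstability conditions; once one writes this down, no further computation is needed. No step here looks like a genuine obstacle, which is consistent with the fact that this lemma is quoted as a one-line observation in \cite{kuhn mccarty}.
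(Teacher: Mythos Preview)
Your argument is correct: the two unstable conditions exactly partition the sum, and each summand vanishes. Note that the paper does not actually prove this lemma but merely cites it from \cite{kuhn mccarty}; your direct verification from the defining formula is precisely the kind of short check one expects, and there is nothing further to compare.
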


Specialized to $s=1$, one learns the following.

\begin{cor}  \label{G(n) res cor} For all $M \in \U$, there is an exact sequence 
$$ 0 \ra \Oinfty(\Sigma^{-1}M) \ra \Sigma^{-1}M \xra{d_0} \Sigma \mathcal R_1(\Sigma^{-1}M) \xra{\rho} \Oinfty_1(\Sigma^{-1}M) \ra 0.$$
\end{cor}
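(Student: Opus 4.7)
The plan is to specialize the cochain complex $R_*(N) = \Sigma\mathcal R_*(\Sigma^{*-1}N)$ from the remark following \thmref{HRM thm} to the input $N = \Sigma^{-1}M$, and read off an exact sequence from its first two cohomology groups.

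The first three terms of this complex are
$$R_0(\Sigma^{-1}M) = \Sigma^{-1}M,\quad R_1(\Sigma^{-1}M) = \Sigma\mathcal R_1(\Sigma^{-1}M),\quad R_2(\Sigma^{-1}M) = \Sigma\mathcal R_2(M),$$
with differentials that are suspensions of the maps $d_s$ supplied by \thmref{HRM thm}. Since $H_0(R_*(\Sigma^{-1}M)) \simeq \Oinfty(\Sigma^{-1}M)$, exactness of the proposed sequence at its first two positions is automatic: $\Oinfty(\Sigma^{-1}M)$ is precisely the kernel of $d_0$ in this complex, realizing the first half of the sequence.

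The key step is to observe that, because $M \in \U$, the previous lemma applied with $s = 1$ says that $d_1 : \mathcal R_1(\Sigma^{-1}M) \to \mathcal R_2(M)$ is the zero map. Consequently the entire module $\Sigma\mathcal R_1(\Sigma^{-1}M)$ lies in $\ker(d_1^R)$, and the identification $H_1(R_*(\Sigma^{-1}M)) \simeq \Oinfty_1(\Sigma^{-1}M)$ collapses to
$$\Oinfty_1(\Sigma^{-1}M) \simeq \Sigma\mathcal R_1(\Sigma^{-1}M)/\mathrm{im}(d_0).$$
Defining $\rho$ to be the resulting quotient map gives both surjectivity onto $\Oinfty_1(\Sigma^{-1}M)$ and exactness at $\Sigma\mathcal R_1(\Sigma^{-1}M)$, completing the four-term exact sequence.

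There is no real obstacle here: the corollary is a direct repackaging of \thmref{HRM thm} combined with the vanishing supplied by the previous lemma. The only subtlety is careful bookkeeping of the suspensions and of the distinction between $R_*$ and $\mathcal R_*$ when translating the vanishing statement into vanishing of $d_1^R$ in the reindexed complex.
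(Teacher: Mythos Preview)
Your proof is correct and matches the paper's approach exactly. The paper gives no explicit proof beyond the one-line remark ``Specialized to $s=1$, one learns the following,'' and you have faithfully unpacked what that specialization entails: using the complex $R_*$ from the remark after \thmref{HRM thm} with input $\Sigma^{-1}M$, and invoking the vanishing of $d_1$ from the preceding lemma to collapse $H_1$ to a cokernel.
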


It follows that $\Oinfty_1(\Sigma^{-1}M) \simeq \Ext_A^{1,1}(G(\star),M)$ is spanned by elements of the form $\rho(\sigma Q^r(\sigma^{-1}x))$, $x \in M$. In particular, 
$$Q(n,r)\in \Ext_A^{1,1}(G(n+r),G(n))$$
 corresponds to  
$$\rho(\sigma Q^r(\sigma^{-1}\iota_n))\in \Oinfty_1(\Sigma^{-1}G(n)).$$

\section{Proofs of \propref{unstable M prop} and \thmref{main thm}} \label{main thm proof sec}

If $M$ is unstable, \corref{G(n) res cor} tells us how to relate $\Ext^{s,s}_A(G(\star),M) \simeq \Oinfty_1(\Sigma^{-1}M)$ to the functor $\Sigma \mathcal R_1(\Sigma^{-1}M)$. 

Our goal now is to relate this to (\ref{basic ses}), which was the short exact sequence
$$ 0 \ra \Sigma^{-1}\Z/2 \ra H_*(P_{-1}) \ra H_*(P_0) \ra 0.$$

\begin{rem} We remind the reader that $H_*(P_{-1})$ has a basis given by elements $t_k \in H_k(P_{-1})$ for $k\geq -1$, with $A$-module structure $t_{k+i}Sq^i = \binom{k}{i} t_k$.  In particular, $t_k Sq^{k+1} = t_{-1}$ for all $k$.
\end{rem}

Tensoring (\ref{basic ses}) with $M \in \M$ yields the short exact sequence 
$$ 0 \ra \Sigma^{-1}M \ra M \otimes H_*(P_{-1}) \ra M \otimes H_*(P_0) \ra 0. $$
This, in turn, induces a long exact sequence that begins
\begin{multline*}
0 \ra \Oinfty(\Sigma^{-1}M) \ra \Oinfty(M \otimes H_*(P_{-1})) \ra \Oinfty(M \otimes H_*(P_0)) \\
\xra{\delta_M} \Oinfty_1(\Sigma^{-1}M) \ra \Oinfty_1(M \otimes H_*(P_{-1})) \ra  \dots
\end{multline*}

Note that if $M$ is unstable, then $\Oinfty(M \otimes H_*(P_0)) = M \otimes H_*(P_0)$.  

\begin{thm} \label{unstable M thm} If $M$ is unstable, $\delta_M: M \otimes H_*(P_0) \ra \Oinfty_1(\Sigma^{-1}M)$ is onto.  Explicitly, given $x \in M$, $ \delta_M(q(x,r)) = \rho(\sigma Q^r(\sigma^{-1}x))$, where $q(x,r) = \sum_j x\chi(Sq^j) \otimes t_{r+j}.$
\end{thm}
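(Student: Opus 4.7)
The plan is to prove the explicit formula $\delta_M(q(x,r)) = \rho(\sigma Q^r(\sigma^{-1} x))$; surjectivity of $\delta_M$ is then immediate from \corref{G(n) res cor}, which shows that every class in $\Oinfty_1(\Sigma^{-1} M)$ has the form $\rho(\sigma Q^r(\sigma^{-1} x))$ for some $x \in M$ and $r \geq 0$. Both sides of the formula are natural in the pair $(M, x)$: given $x \in M_n$, there is a unique $f: G(n) \to M$ in $\U$ with $f(\iota_n) = x$, and one has $q(x, r) = (f \otimes 1)(q(\iota_n, r))$ while $\rho(\sigma Q^r(\sigma^{-1} x)) = (\Sigma^{-1} f)_*(\rho(\sigma Q^r(\sigma^{-1} \iota_n)))$. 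Combined with naturality of $\delta$, this reduces the formula to the universal case $M = G(n)$, $x = \iota_n$.

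In that universal case, I would compute $\delta_{G(n)}(q(\iota_n, r)) \in \Ext^1_A(G(n+r), G(n))$ as the Yoneda pullback of the defining short exact sequence along the map $G(n+r) \to G(n) \otimes H_*(P_0)$ classified by $q(\iota_n, r)$. Lifting to $\tilde q(\iota_n, r) = \sum_j \iota_n\chi(Sq^j) \otimes t_{r+j} \in G(n) \otimes H_*(P_{-1})$, the key calculation uses the Cartan formula, the identity $t_{r+j} Sq^{r+j+1} = t_{-1}$ in $H_*(P_{-1})$, and the antipode identity $\sum_{j\ge0} \chi(Sq^j) Sq^{k-j} = \delta_{k,0}$ to show that the $\Sigma^{-1}G(n)$-component (i.e.\ the $t_{-1}$-part) of $\tilde q(\iota_n, r) \cdot Sq^i$ equals $\sigma^{-1} \iota_n$ when $i = r+1$ and vanishes otherwise. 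This completely determines the pullback extension, which is generated by a top class $e$ of degree $n+r$ satisfying the unique non-trivial excess relation $e \cdot Sq^{r+1} = \sigma^{-1} \iota_n$.

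The final step is to identify this extension with the class of $\sigma Q^r(\sigma^{-1} \iota_n) \in \Sigma \mathcal R_1(\Sigma^{-1} G(n))$ modulo $\IM(d_0)$. This is the main obstacle, since the isomorphism $H^1(R_*) \simeq \Oinfty_1$ from \cite{kuhn mccarty} is established indirectly via the uniqueness argument sketched in the remark after \thmref{HRM thm}. My approach is to construct a natural chain-level comparison from the two-term complex $M \otimes H_*(P_{-1}) \to M \otimes H_*(P_0)$ to $R_0(\Sigma^{-1} M) \to R_1(\Sigma^{-1} M)$, extending the identity on $\Sigma^{-1} M$ and sending $q(x, r) \mapsto \sigma Q^r(\sigma^{-1} x)$; verifying $A$-linearity of this comparison map — which is precisely where the Nishida relations must balance the antipode-twisted formula defining $q(x,r)$ — is the technical core of the proof.
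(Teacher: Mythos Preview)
Your reduction to the universal case and the surjectivity argument via \corref{G(n) res cor} are fine, but the heart of the matter is your ``final step,'' and here you are making things harder than necessary. The chain-level comparison you want to build by hand already exists: it is the natural transformation $d_0$ itself. Apply $d_0$ functorially to the short exact sequence $0 \to \Sigma^{-1}M \to M\otimes H_*(P_{-1}) \to M\otimes H_*(P_0) \to 0$; the target is the short exact sequence obtained by applying the exact functor $\Sigma\mathcal R_1(-)$. Since $M\otimes H_*(P_0)$ is unstable, $d_0$ vanishes on it, so the middle $d_0$ factors through a map $\tilde d_0: M\otimes H_*(P_{-1}) \to \Sigma\mathcal R_1(\Sigma^{-1}M)$. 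Because $R_*$ is an exact functorial complex computing $\Oinfty_*$, the snake lemma identifies $\delta_M$ with $\rho\circ \tilde d_0$ modulo the image of $\Sigma^{-1}M$. This is the paper's \lemref{ext lemma}. The point is that $A$-linearity of $\tilde d_0$ is \emph{free}: it is inherited from the $A$-linearity of $d_0$, so no Nishida-relations balancing act is required.

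Once you have $\delta_M = \rho\circ\tilde d_0$, the formula is a direct computation (the paper's \propref{calculating prop}): using the explicit formula for $d_0$ from \thmref{HRM thm} one finds $\tilde d_0(x\otimes t_r) = \sum_i \sigma Q^{r+i}(\sigma^{-1}xSq^i)$, and then the antipode identity $\sum_{i+j=k}\chi(Sq^j)Sq^i = \delta_{k,0}$ gives $\tilde d_0(q(x,r)) = \sigma Q^r(\sigma^{-1}x)$. Your steps 2--3 (computing the $t_{-1}$-component of $\tilde q(\iota_n,r)\cdot Sq^i$ to describe the pullback extension as an $A$-module) are an unnecessary detour: you are computing the right thing in the wrong coordinates, and then facing the problem of translating back---which is exactly what the direct snake-lemma approach avoids.
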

(Here $\chi$ is the antipode of the Steenrod algebra.)

Thanks to \lemref{ext lem}, the first statement in this theorem is equivalent to the first statement in \propref{unstable M prop}.  

Similarly, the second statement in \propref{unstable M prop} is implied by the next lemma, which admits a short proof.
\begin{lem} If $M\in \M$ has top nonzero degree $n$, then $\Oinfty(M \otimes H_*(P_{-1}))_m = 0$ for $m \geq 2n-1$.
\end{lem}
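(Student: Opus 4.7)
My plan is to argue directly from the Cartan formula, exploiting the identity $t_k \cdot Sq^{k+1} = t_{-1}$ (for every $k \geq -1$) that is implicit in the Remark preceding the statement. Write an arbitrary element $y \in (M \otimes H_*(P_{-1}))_m$ as a finite sum
$$ y = \sum_j u_j \otimes t_{m-j}, \qquad u_j \in M_j,$$
with $j$ running over integers $j \leq n$, since $M_j = 0$ for $j > n$. My goal is to show that if $y \in \Oinfty(M \otimes H_*(P_{-1}))$ and $m \geq 2n-1$, then every $u_j$ vanishes; this is equivalent to the claim because $\Oinfty$ is the maximal unstable submodule.

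The key step is to isolate the $t_{-1}$-coefficient of $y \cdot Sq^i$. A short calculation using Cartan together with $t_r \cdot Sq^b = \binom{r-b}{b} t_{r-b}$ shows that the only contribution of $(u_j \otimes t_{m-j}) \cdot Sq^i$ to $t_{-1}$ comes from $b = m - j + 1$, $a = i - m + j - 1$, with binomial coefficient $\binom{-1}{m-j+1} \equiv 1 \pmod 2$. After the substitution $l = m+1-i$, this $t_{-1}$-coefficient reads
$$ \sum_{j \geq l} u_j \cdot Sq^{j-l} \ \in \ M_l.$$
Because $y$ lies in $\Oinfty$, this quantity must vanish whenever $2i > m$, i.e.\ whenever $l \leq \lceil m/2 \rceil$, and the hypothesis $m \geq 2n-1$ is precisely what makes $\lceil m/2 \rceil \geq n$; thus the vanishing is available for every integer $l \leq n$.

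From there I would run a downward induction on $l$, starting at $l = n$. The top constraint reads $u_n \cdot Sq^0 = u_n = 0$, and given $u_j = 0$ for all $j > l$ the $l$-constraint collapses to $u_l = 0$. Since $y$ is a finite sum, after finitely many steps every $u_j$ is killed, and hence $y = 0$.

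The only real obstacle I foresee is the bookkeeping of binomial coefficients and index ranges when extracting the $t_{-1}$-coefficient. Conceptually, the argument is driven entirely by the ``$Sq^{k+1}$ sends $t_k$ to $t_{-1}$'' phenomenon, which makes each surviving $y \cdot Sq^i$ cleanly pick out a single $u_j$ modulo contributions from strictly higher $M$-degree; the bound $m \geq 2n-1$ enters only to guarantee that the induction can be started at the top degree $n$ of $M$.
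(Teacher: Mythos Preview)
Your argument is correct. Both your proof and the paper's hinge on the single identity $t_k Sq^{k+1}=t_{-1}$, but the packaging differs: the paper filters $M$ by degree and uses left exactness of $\Oinfty$ to reduce to the one--dimensional case $M=\Sigma^k\Z/2$, where the check is a one--liner, whereas you work directly with a general $y=\sum_j u_j\otimes t_{m-j}$ and peel off the coefficients $u_j$ by a downward induction on $j$. Your induction is exactly the d\'evissage the paper invokes abstractly, unwound by hand; what you gain is that you never have to mention the filtration or the left exactness of $\Oinfty$, and in fact you only use the bare unstable condition $ySq^i=0$ for $2i>m$ rather than the full hypothesis $y\in\Oinfty(\,\cdot\,)$. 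What the paper's version buys is brevity: once reduced to $\Sigma^k H_*(P_{-1})$, the element in degree $m$ is the single basis vector $\sigma^k t_{m-k}$, and $Sq^{m-k+1}$ visibly destabilizes it precisely when $m\ge 2k-1$.
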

\begin{proof}  Filtering $M$ by degree, it suffices to prove the lemma when $M = \Sigma^k\Z/2$ for all $k \leq n$.  Using that $t_i Sq^{i+1} = t_{-1}$ for all $i$, It is then easy to check that $\Oinfty(\Sigma^k H_*(P_{-1}))_m = 0$ for all $m \geq 2k-1$.
\end{proof}

Finally, the explicit formula in \thmref{unstable M thm} implies \thmref{main thm}, thanks to the following lemma.

\begin{lem} $\iota_n\chi(Sq^j) \in G(n)_{n-j}$ is the sum of the basis elements in $G(n))_{n-j}$. Thus $q(\iota_n,r) \in G(n) \otimes H_*(P_0)$ is the sum of all the basis elements in degree $(n+r)$.
\end{lem}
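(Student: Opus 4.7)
The plan is to reduce the lemma to the single Milnor-basis identity
\[ \chi(Sq^j) = \sum_{|R|=j} Sq^R \]
in $A^j$, where the sum runs over every Milnor basis element of degree $j$.

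I first unpack how this identity delivers the lemma. By \lemref{A and G(n) lem}, the epimorphism $\pi: A^j \twoheadrightarrow G(n)_{n-j}$, $a \mapsto \iota_n \cdot a$, has kernel spanned by the Milnor basis elements $Sq^R$ of excess greater than $n$, and its image has basis $\{\iota_n Sq^R : |R|=j,\ \text{excess}(R) \le n\}$. Granted the identity above, $\iota_n \chi(Sq^j)$ is exactly the sum of these surviving basis elements, which is the first assertion. The formula $q(\iota_n,r) = \sum_j \iota_n\chi(Sq^j)\otimes t_{r+j}$ of \thmref{unstable M thm}, together with the fact that $\{t_k : k \ge 0\}$ is a basis for $H_*(P_0)$, then shows that $q(\iota_n,r)$ is the sum of all basis vectors of $G(n) \otimes H_*(P_0)$ of total degree $n+r$.

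To establish the identity I would dualize. Writing $\chi(Sq^j) = \sum_R c_R Sq^R$, one has
\[ c_R = \langle \chi(Sq^j), \xi^R \rangle = \langle Sq^j, \chi(\xi^R) \rangle = \langle Sq^j, \bar\xi^R \rangle, \]
using that $\chi$ is an algebra map on the commutative polynomial ring $A_* = \F_2[\xi_1,\xi_2,\dots]$, with $\bar\xi_i := \chi(\xi_i)$ and $\bar\xi^R := \prod_i \bar\xi_i^{r_i}$. Since $Sq^j$ is dual to $\xi_1^j$ in the Milnor basis, $c_R$ equals the coefficient of $\xi_1^{|R|}$ in $\bar\xi^R$, so the identity reduces to the combinatorial claim $[\xi_1^{|R|}]\,\bar\xi^R = 1$ for every $R$.

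The main (and only) obstacle is this last claim, which I expect to handle with a short inductive argument. From the antipode recursion $\bar\xi_k = \xi_k + \sum_{j=1}^{k-1}\xi_{k-j}^{2^j}\,\bar\xi_j$, induction on $k$ shows that $\bar\xi_k$ contains the pure power $\xi_1^{2^k-1}$ with coefficient $1$: only the $j=k-1$ summand $\xi_1^{2^{k-1}}\bar\xi_{k-1}$ can contribute a monomial purely in $\xi_1$, since for the other indices $\xi_{k-j}^{2^j}$ involves a variable $\xi_{k-j}$ with $k-j\geq 2$ which cannot be cancelled in a polynomial ring; by induction this summand supplies $\xi_1^{2^{k-1}}\cdot \xi_1^{2^{k-1}-1} = \xi_1^{2^k-1}$ with coefficient $1$. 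For general $R$, expanding $\bar\xi^R = \prod_i \bar\xi_i^{r_i}$ monomial-by-monomial, a pure $\xi_1$-monomial can only arise by selecting a pure $\xi_1$-monomial in each factor; since $\bar\xi_i$ is homogeneous of degree $2^i-1$, its only pure $\xi_1$-monomial is $\xi_1^{2^i-1}$ (coefficient $1$), so the sole contribution is $\prod_i (\xi_1^{2^i-1})^{r_i} = \xi_1^{|R|}$ with coefficient exactly $1$, completing the proof.
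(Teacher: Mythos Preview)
Your proof is correct and follows essentially the same route as the paper: both reduce the lemma, via \lemref{A and G(n) lem}, to the identity $\chi(Sq^j) = \sum_{|R|=j} Sq^R$ in $A^j$. The only difference is that the paper cites this identity directly from Milnor \cite[Cor.~6]{milnor}, whereas you supply a self-contained dualization argument to prove it.
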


\begin{proof} By \lemref{A and G(n) lem}, we have a commutative diagram
$$
\SelectTips{cm}{}
\xymatrix{
A^{0} \ar[d]^{\cdot \chi(Sq^j)} \ar@{=}[r] & G(n)_n \ar[d]^{\cdot \chi(Sq^j)}  \\
A^{j} \ar@{->>}[r] & G(n)_{n-j}, }
$$
and the lemma follows from Milnor's result \cite[Cor. 6]{milnor} that $\chi(Sq^j)$ is the sum of all the (now called) Milnor basis elements in $A^{j}$.
\end{proof}

We now begin the proof of \thmref{unstable M thm}.

Let $M$ be unstable, and consider the following commutative diagram
\begin{equation*}
\SelectTips{cm}{}
\xymatrix{
0 \ar[r]& \Sigma^{-1}M \ar[d]^{d_0} \ar[r] & M \otimes H_*(P_{-1}) \ar[d]^{d_0} \ar[r] & M \otimes H_*(P_0) \ar[d]^{d_0 = 0} \ar[r] & 0 \\
0 \ar[r]& \Sigma\mathcal R_1(\Sigma^{-1}M)  \ar[r] & \Sigma\mathcal R_1(M \otimes H_*(P_{-1}))  \ar[r] & \Sigma\mathcal R_1(M \otimes H_*(P_0))  \ar[r] & 0.}
\end{equation*}

This has exact rows, and we make some observations.  Firstly, since $M$ is unstable, the cokernel of the left vertical map is $\Oinfty_1(\Sigma^{-1}M)$, and the right vertical map is zero because $M \otimes H_*(P_0)$ is unstable.  Thus the middle vertical map lifts to a map
$$\tilde d_0: M \otimes H_*(P_{-1}) \ra \Sigma\mathcal R_1(\Sigma^{-1}M).$$  Factoring out the image of $\Sigma^{-1}M$ from both the domain and range of $\tilde d_0$, induces precisely our connecting map
$$ \delta_M: M \otimes H_*(P_0) \ra \Oinfty_1(\Sigma^{-1}M).$$

We have checked the following lemma.

\begin{lem} \label{ext lemma}  If $M$ is an unstable right module, there is a map $\tilde d_0$ making the diagram
\begin{equation*}
\SelectTips{cm}{}
\xymatrix{
 & M \otimes H_*(P_{-1}) \ar[dl]_{d_0} \ar[d]^{\tilde d_0} \ar[r] & M \otimes H_*(P_0) \ar[d]^{\delta_M} \\
\Sigma \mathcal R_1(M \otimes H_*(P_{-1})) & \Sigma \mathcal R_1(\Sigma^{-1}M) \ar@{>->}[l] \ar@{->>}[r]^{\rho} & \Oinfty_1(\Sigma^{-1}M) }
\end{equation*}
commute.
\end{lem}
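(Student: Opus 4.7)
The plan is to apply the natural transformation $d_0$ to the short exact sequence
$$ 0 \ra \Sigma^{-1} M \ra M \otimes H_*(P_{-1}) \ra M \otimes H_*(P_0) \ra 0 $$
and combine two inputs: exactness of $\mathcal R_1$ and vanishing of $d_0$ on unstable modules. First I would record the naturality square induced by $d_0$, with the above sequence on top and the induced short exact sequence
$$ 0 \ra \Sigma \mathcal R_1(\Sigma^{-1}M) \ra \Sigma \mathcal R_1(M \otimes H_*(P_{-1})) \ra \Sigma \mathcal R_1(M \otimes H_*(P_0)) \ra 0 $$
on the bottom. The rightmost vertical map is the component of $d_0$ at $M \otimes H_*(P_0)$. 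Since $H_*(P_0) = H_*(\Sinfty \mathbb R \mathbb P^{\infty}_+) \in \U$ and $M \in \U$, the tensor product is unstable, so the last cited lemma of \cite{kuhn mccarty} forces this component of $d_0$ to vanish.

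A diagram chase then shows that the middle vertical $d_0$ lands in the subobject $\Sigma \mathcal R_1(\Sigma^{-1}M)$, defining $\tilde d_0$ and establishing commutativity of the left triangle of the lemma. For the right square, I would invoke the snake-lemma description of $\delta_M$. Under the identification $\Oinfty_1(\Sigma^{-1}M) \cong \Ext^{1,1}_A(G(\star),\Sigma^{-1}M)$ of \lemref{ext lem}, combined with the chain-level model for $\Oinfty_1$ supplied by \thmref{HRM thm} via the complex $R_*(\Sigma^{-1}M)$, the connecting map sends $y \in M \otimes H_*(P_0)$ to the class of $d_0(\tilde y) \in \Sigma \mathcal R_1(M \otimes H_*(P_{-1}))$, pulled back to $\Sigma \mathcal R_1(\Sigma^{-1}M)$ and reduced modulo the image of $d_0 \colon \Sigma^{-1}M \ra \Sigma \mathcal R_1(\Sigma^{-1}M)$, where $\tilde y$ is any lift of $y$. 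By the left triangle just proved, this representative equals $\tilde d_0(\tilde y)$; reduction modulo the image of $d_0$ is precisely $\rho$; and independence of the choice of lift is automatic, since two lifts differ by an element of $\Sigma^{-1}M$, whose $\tilde d_0$-image coincides with its $d_0$-image and is therefore killed by $\rho$.

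The main obstacle I anticipate is the bookkeeping needed to match the abstractly defined $\delta_M$ (as a Yoneda pullback of extensions) with the concrete chain-level snake-lemma formula on $R_*$. This is essentially the standard compatibility of Yoneda Ext with derived-functor Ext, but here it has to be pushed through the specific isomorphisms of \lemref{ext lem} and \thmref{HRM thm}, which in turn rely on the somewhat indirect identification $H_s(R_*(M)) \cong \Oinfty_s M$. Once that compatibility is set up, the diagram chase above is routine.
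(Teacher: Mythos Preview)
Your proposal is correct and follows essentially the same route as the paper: apply $d_0$ to the short exact sequence, use vanishing of $d_0$ on the unstable module $M\otimes H_*(P_0)$ to factor the middle vertical through $\Sigma\mathcal R_1(\Sigma^{-1}M)$, and then identify the induced map on cokernels with the connecting map $\delta_M$ via the snake lemma on the chain model $R_*$. The compatibility you flag between the Yoneda description of $\delta_M$ and the derived-functor connecting map is indeed needed, but the paper simply takes this standard identification for granted rather than spelling it out.
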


The next proposition then finishes the proof of \thmref{unstable M thm}.

\begin{prop} \label{calculating prop} Let $x$ be an element in an unstable module $M$. 

{\bf (a)} \ $\displaystyle \tilde d_0(x \otimes t_r) = \sum_i \sigma Q^{r+i}(\sigma^{-1}xSq^{i})$. \\

\noindent{\bf (b)} \ Let $\displaystyle q(x,r) = \sum_j x\chi(Sq^j)\otimes t_{r+j}$. Then
$\displaystyle \tilde d_0(q(x,r)) = \sigma Q^r(\sigma^{-1}x)$.
\end{prop}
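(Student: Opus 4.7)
The plan is to compute $d_0(x \otimes t_r)$ explicitly, show that all terms involving $t_k$ with $k \ge 0$ are killed by a coefficient-vs-allowability dichotomy so only the $t_{-1}$ terms survive and exhibit the claimed lift, and then derive (b) from (a) via the antipode identity in the Steenrod algebra.

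For (a), I will first unwind the middle vertical $d_0$ of the displayed diagram as
\[ d_0(y) \;=\; \sum_i \sigma Q^{i-1}(y\, Sq^i) \qquad \text{for } y \in M \otimes H_*(P_{-1}), \]
which is just the $\Sigma$-suspension of the defining formula of $d_0$ in \thmref{HRM thm}. Applied to $y = x \otimes t_r$, the Cartan formula together with $t_j Sq^b = \binom{j-b}{b}\, t_{j-b}$ expands $(x \otimes t_r) Sq^i$, and reindexing by $k := r - b$ gives
\[ d_0(x \otimes t_r) \;=\; \sum_{k \ge -1} \sum_{a \ge 0} \binom{k}{r-k}\, \sigma Q^{a+r-k-1}\bigl((xSq^a) \otimes t_k\bigr). \]
The $k = -1$ part has coefficient $\binom{-1}{r+1} \equiv 1 \pmod 2$ and contributes $\sum_a \sigma Q^{a+r}((xSq^a) \otimes t_{-1})$, which is precisely the image of the proposed $\sum_i \sigma Q^{r+i}(\sigma^{-1} xSq^i)$ under the inclusion $\Sigma \mathcal R_1(\Sigma^{-1}M) \hra \Sigma \mathcal R_1(M \otimes H_*(P_{-1}))$.

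It remains to show that every $k \ge 0$ summand vanishes. Since $M$ is unstable, $xSq^a = 0$ unless $2a \le |x|$; the allowability condition on $\mathcal R_1$ forces $Q^{a+r-k-1}\bigl((xSq^a) \otimes t_k\bigr) = 0$ unless $a + r - k - 1 \ge |x| - a + k$, i.e. $2a \ge |x| + 2k - r + 1$. For both inequalities to hold for some $a$ requires $2k \le r - 1$, in which case $k < r - k$ and therefore $\binom{k}{r-k} \equiv 0 \pmod 2$. So in every $k \ge 0$ case either the binomial or the $Q$-term vanishes. This identifies $d_0(x \otimes t_r)$ with the image of the proposed lift, and uniqueness of the lift yields (a).

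For (b), applying (a) termwise to $q(x,r) = \sum_j x \chi(Sq^j) \otimes t_{r+j}$ and collecting by $k := i + j$ gives
\[ \tilde d_0(q(x,r)) \;=\; \sum_{k \ge 0} \sigma Q^{r+k}\Bigl(\sigma^{-1}\bigl(x \cdot {\textstyle \sum_{i+j=k}} \chi(Sq^j) Sq^i\bigr)\Bigr). \]
The Hopf algebra antipode axiom applied to $\Delta(Sq^k) = \sum_{i+j=k} Sq^i \otimes Sq^j$ yields $\sum_{i+j=k} \chi(Sq^j) Sq^i = \delta_{k,0}$, so only $k = 0$ survives and the sum collapses to $\sigma Q^r(\sigma^{-1} x)$. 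The main obstacle is the dichotomy argument in (a): one has to check that the allowability range for $a$ is empty exactly when the Cartan binomial is nonzero mod $2$, and conversely. Once this is in hand, both parts of the proposition follow formally.
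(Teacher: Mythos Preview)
Your proof is correct, and part (b) matches the paper's argument essentially verbatim. Part (a), however, proceeds differently from the paper. The paper leans on \lemref{ext lemma}: since the lift $\tilde d_0$ is already known to exist, $d_0(x\otimes t_r)$ must land in the image of $\Sigma\mathcal R_1(\Sigma^{-1}M)$, i.e.\ in the $t_{-1}$ component, \emph{a priori}. The paper then writes out the Cartan expansion and simply discards all $j>0$ terms because the answer is known to involve only $t_{-1}$; no binomial or allowability analysis is needed. Your approach instead verifies the vanishing of the $k\ge 0$ summands directly, via the dichotomy between the unstable/allowability constraints on $a$ and the mod~$2$ vanishing of $\binom{k}{r-k}$ when $2k<r$. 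This is more explicit and gives an independent confirmation that the lift exists on each $x\otimes t_r$, at the cost of the extra case analysis; the paper's route is shorter but relies on the structural argument of \lemref{ext lemma} as a black box. One small wording point: your final summary sentence suggests an ``exactly when'' relationship between the binomial and the allowability range, but what you actually prove (and need) is only that for each $k\ge 0$ at least one of the two obstructions applies.
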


\begin{proof}
The inclusion $\Sigma^{-1}M \hra M \otimes H_*(P_{-1})$ sends $\sigma^{-1}y$ to $y \otimes t_{-1}$.  Thus the commutative triangle in \lemref{ext lemma} tells us two things: firstly, we can calculate $\tilde d_0(x \otimes t_r)$ by instead calculating $d_0(x \otimes t_r)$, and secondly, $d_0(x \otimes t_r)$ must be a linear combination of terms of the form $\sigma Q^{r+i}(y \otimes t_{-1})$ with $y \in M_{|x|-i}$.

We use the formula for $d_0$ in \thmref{HRM thm} to compute:
\begin{equation*}
\begin{split}
d_0(x \otimes t_r) &
= \sum_{i}  \sigma Q^{r+i}((x \otimes t_r)Sq^{r+i+1}) \\
  & = \sum_{i} \sum_j \sigma Q^{r+i}(x Sq^{i+j}\otimes t_rSq^{r+1-j}) \\
    & = \sum_{i} \sigma Q^{r+i}(x Sq^i\otimes t_{-1}). \\
\end{split}
\end{equation*}
The last equality here holds because, firstly, $t_rSq^{r+1} = t_{-1}$ for all $r$, and, secondly,  all the terms in the double sum with $j>0$ must be zero by the second observation above.  Thus we see that statement (a) of the proposition holds.

Statement (b) now follows from a straightforward calculation:
\begin{equation*}
\begin{split}
\tilde d_0(q(x,r)) & = \sum_j \tilde d_0(x \chi(Sq^j)\otimes t_{r+j}) \\
& = \sum_j \sum_ i \sigma Q^{r+j+i}( \sigma^{-1}x\chi(Sq^j) Sq^{i}) \\
& = \sum_k \sigma Q^{r+k}(\sigma^{-1} (\sum_{i+j=k} x\chi(Sq^j) Sq^{i})) \\
& = \sigma Q^r \sigma^{-1} x,
\end{split}
\end{equation*}
since
$
\displaystyle \sum_{i+j=k} x\chi(Sq^j) Sq^{i} =
\begin{cases}
x & \text{if } k=0 \\ 0 & \text{otherwise. }
\end{cases}
$
\end{proof}

\begin{rem}  Maps like $\tilde d_0$ appear in the literature related to the Singer construction.  See, for example, \cite[Lemma 1.2]{miller} (written in cohomology).  
This $A$-module map also has a geometric origin. If $D_2(X) = X^{\sm 2}_{h\Sigma_2}$, then $\mathcal R_1(H_*(X)) \subseteq H_*(D_2(X))$. Note that $P_{-1} = \Sigma D_2(S^{-1})$.  If $X$ is a space, there is a natural map
$\Delta_X: X \sm D_2(S^{-1}) \ra  D_2(\Sigma^{-1}X)$, and 
$\Sigma \Delta_X$ induces $\tilde d_0: H_*(X) \otimes H_*(P_{-1}) \ra \Sigma \mathcal R_1(\Sigma^{-1}H_*(X))$ in homology.
\end{rem}
\section{Proofs of \propref{Q(n,n-1) prop} and \thmref{squaring thm}} \label{squaring sec}

In this section, we first check the identification of $Q(n,n-1)$ and $Q(n,n)$ as in \propref{Q(n,n-1) prop}.

We start with a useful characterization of the Mahowald short exact sequence.

\begin{lem} The Mahowald sequence 
$$0 \ra G(n) \xra{Sq^n \cdot} G(2n) \xra{p_{2n-1}} \Sigma G(2n-1) \ra 0$$ is the unique nonsplit extension in $\Ext_A^1(\Sigma G(2n-1), G(n))$ that splits after pulling back by $p_{2n-1}$.
\end{lem}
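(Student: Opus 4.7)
The plan is to identify $\ker p_{2n-1}^*$ exactly via the long exact sequence of $\Ext$-groups associated to the Mahowald sequence itself, and then verify nontriviality of its extension class using the unstable condition at the top class.

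First I would verify that the Mahowald sequence does lie in $\ker p_{2n-1}^*$. Pulling back $0 \to G(n) \to G(2n) \to \Sigma G(2n-1) \to 0$ along $p_{2n-1}: G(2n) \to \Sigma G(2n-1)$ produces the extension $0 \to G(n) \to G(2n) \times_{\Sigma G(2n-1)} G(2n) \to G(2n) \to 0$, which is split by the diagonal map $G(2n) \to G(2n) \times_{\Sigma G(2n-1)} G(2n)$.

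Next I would apply $\Hom_A(-, G(n))$ to the Mahowald sequence to obtain the exact fragment
$$\Hom_A(G(n), G(n)) \xra{\delta} \Ext_A^1(\Sigma G(2n-1), G(n)) \xra{p_{2n-1}^*} \Ext_A^1(G(2n), G(n)).$$
By the standard Yoneda interpretation of the connecting homomorphism, $\delta(1_{G(n)})$ is precisely the extension class of the Mahowald sequence. Since $G(n)$ is unstable and corepresents $M \mapsto M_n$ on $\U$, we have $\Hom_A(G(n), G(n)) = G(n)_n = \Z/2$. Exactness therefore shows $\ker p_{2n-1}^*$ is at most one-dimensional, and if nonzero it is generated by the Mahowald class.

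The remaining point to verify, and the only one requiring any care, is that the Mahowald sequence is genuinely nonsplit. If an $A$-module section $s: \Sigma G(2n-1) \to G(2n)$ of $p_{2n-1}$ existed, then in top degree $2n$ both modules are $\Z/2$ and $p_{2n-1}$ is an isomorphism there, forcing $s(\sigma \iota_{2n-1}) = \iota_{2n}$. Applying $Sq^n$ on the right and using $A$-linearity of $s$ would give $\iota_{2n} Sq^n = s(\sigma(\iota_{2n-1} Sq^n))$. The unstable condition $x Sq^i = 0$ when $2i > |x|$ forces $\iota_{2n-1} Sq^n = 0$ (since $2n > 2n-1$), so the right side vanishes. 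But $\iota_{2n} Sq^n$ is nonzero in $G(2n)_n$, since the excess of $Sq^n$ is $n \leq 2n$ and $G(2n)$ is free unstable on a class of degree $2n$. This contradiction shows that the Mahowald extension represents the unique nonzero, hence nonsplit, element of $\ker p_{2n-1}^*$.
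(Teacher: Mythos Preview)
Your proof is correct and follows the same approach as the paper: apply $\Hom_A(-,G(n))$ to the Mahowald sequence, identify the Mahowald class with $\delta(1_{G(n)})$, and use $\Hom_A(G(n),G(n)) = \Z/2$ to see that $\ker p_{2n-1}^*$ has at most one nonzero element. You supply details the paper omits---an explicit nonsplitness argument via the unstable condition, and the direct check that the pullback splits (this last is redundant once you have exactness in the second paragraph, but harmless).
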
 
\begin{proof} The Mahowald sequence {\em is} nonsplit and induces
$$ \Hom_A(G(n),G(n)) \xra{\delta} \Ext^1_A(\Sigma G(2n-1),G(n)) \xra{p_{2n-1}^*} \Ext^1_A(G(2n),G(n)),$$
exact in the middle.  By construction, the Mahowald sequence corresponds to $\delta(1_{G(n)})$, the unique nonzero element in $\ker(p_{2n-1}^*)$. 
\end{proof}

Now we give a diagrammatic consequence of \corref{Qr commutes with suspension cor}.

\begin{lem} \label{pushout pullback lemma}  There is a commutative diagram of $A$--modules
\begin{equation*}
\SelectTips{cm}{}
\xymatrix{
0  \ar[r] &  G(n)   \ar[r]& \Sigma Q(n,r)   \ar[r]&  \Sigma G(n+r)  \ar[r] & 0 \\
0  \ar[r] &  G(n) \ar@{=}[u]  \ar[r]& M \ar[u]  \ar[r]&  G(n+r+1) \ar[u]_{p_{n+r}} \ar[r] & 0 \\
0  \ar[r] & \Sigma^{-1} G(n+1)  \ar[u]_{p_n} \ar[r]& Q(n+1,r) \ar[u] \ar[r]&   G(n+r+1) \ar@{=}[u] \ar[r] & 0, }
\end{equation*}
in which the upper right square is a pullback, and the lower left square is a pushout, and the rows are exact.
\end{lem}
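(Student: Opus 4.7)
My plan is to realize the middle row of the lemma as the common value of a pullback and a pushout, identifying the two via diagram (\ref{epsilon and Ext diagram}) from \corref{Qr commutes with suspension cor}. Specialize that diagram to $s=0$ and $M=G(n)$, and use the tautological suspension identification $\Ext_A^{0,0}(G(n),G(n)) = \Ext_A^{0,0}(\Sigma G(n),\Sigma G(n))$ to obtain the commutative square
\[
\SelectTips{cm}{}
\xymatrix{
\Ext_A^{0,0}(\Sigma G(n), \Sigma G(n)) \ar[r]^-{Q^r} \ar[d]^{p_n^*} & \Ext_A^{1,1}(\Sigma G(n+r), \Sigma G(n)) \ar[d]^{p_{n+r}^*} \\
\Ext_A^{0,0}(G(n+1), \Sigma G(n)) \ar[r]^-{Q^r} & \Ext_A^{1,1}(G(n+r+1), \Sigma G(n)).
}
\]
I will chase the identity element $1_{\Sigma G(n)}$ around this square.

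Going across and then down, $1_{\Sigma G(n)} \mapsto \Sigma Q(n,r) \mapsto p_{n+r}^*(\Sigma Q(n,r))$, which by definition is represented by the pullback of the top row of the lemma along $p_{n+r}$. Going down and then across, $1_{\Sigma G(n)} \mapsto p_n \mapsto Q^r(p_n)$; by naturality of the Yoneda operation $Q^r$ in its coefficient module, which follows from the fact that $Q^r$ is splicing with the universal class $Q(n+1,r)=Q^r(1_{G(n+1)})$, this equals $(p_n)_* Q(n+1,r)$, the pushout of the bottom row along $\Sigma^{-1} p_n \colon \Sigma^{-1} G(n+1) \to G(n)$. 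Commutativity of the square forces these two extensions to represent a single class in $\Ext_A^1(G(n+r+1), G(n))$. Choosing any representative short exact sequence $0 \to G(n) \to M \to G(n+r+1) \to 0$ in that class, the universal properties of pullbacks and pushouts of short exact sequences supply the vertical maps into and out of $M$, make the upper right square a pullback and the lower left square a pushout, and guarantee all remaining commutativities and exactness.

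The main (and essentially only) substantive ingredient is the identification $Q^r(p_n) = (p_n)_* Q(n+1,r)$, i.e.\ naturality of our Dyer--Lashof operations in the coefficient module. Everything else is formal: diagram (\ref{epsilon and Ext diagram}) does exactly the work of asserting ``pulling $\Sigma Q(n,r)$ back along $p_{n+r}$ equals pushing $Q(n+1,r)$ out along $\Sigma^{-1} p_n$,'' which is precisely the content of the lemma.
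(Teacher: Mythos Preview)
Your proof is correct and follows essentially the same approach as the paper: specialize diagram (\ref{epsilon and Ext diagram}) to $s=0$ and $M=G(n)$, chase the identity around the resulting square, and identify the two paths as the pullback and pushout respectively, using naturality of $Q^r$ in the coefficient module for the equality $Q^r(p_n)=(p_n)_*Q(n+1,r)$. The paper phrases the naturality step as $Q^r(p_{n*}(1_{G(n+1)}))=p_{n*}(Q^r(1_{G(n+1)}))$, but this is the same observation you make.
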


\begin{proof} Specialization of (\ref{epsilon and Ext diagram}) to the case when $s=0$ and $M=G(n)$ tells us that there is a commutative diagram
\begin{equation*} 
\SelectTips{cm}{}
\xymatrix{
 \Hom_A(G(n),G(n)) \ar[r]^-{Q^r} \ar@{=}[d] &  \Ext_A^{1,1}(G(n+r),G(n)) \ar@{=}[d] \\
 \Hom_A(\Sigma G(n), \Sigma G(n)) \ar[d]^{p^*_{n}} & \Ext_A^{1,1}(\Sigma G(n+r),\Sigma G(n)) \ar[d]^{p^*_{(n+r)}} \\
 \Hom_A(G(n+1),\Sigma G(n)) \ar[r]^-{Q^r} & \Ext_A^{1,1}(G(n+r+1),\Sigma G(n)). }
\end{equation*}
The image of $1_{G(n)}$ under the upper horizontal map followed by the right vertical map is the middle sequence of the diagram of the lemma viewed as the pullback of the top sequence by the map $p_{n+r}$.  Meanwhile, the image of $1_{G(n)}$ under the left vertical map followed by the lower horizontal map is $Q^r(p_n) = Q^r(p_{n*}( 1_{G(n+1)})) = p_{n*}(Q^r(1_{G(n+1)}))$, which is the middle sequence of the  diagram of the lemma viewed as the pushout of the bottom sequence by the map $p_n$.   
\end{proof}

\begin{proof}[Proof of \propref{Q(n,n-1) prop}]

$Q^{n-1}(1_{G(n)})$ corresponds to the nonsplit short exact sequence
\begin{equation} \label{Q(n,n-1) sequence}  0 \ra G(n) \ra \Sigma Q(n,n-1) \ra \Sigma G(2n-1) \ra 0.
\end{equation}
\lemref{pushout pullback lemma} tells us that $p_{2n-1}^*(Q^{n-1}(1_{G(n)})) = p_{n*}(Q^{n-1}(1_{G(n+1)})) = 0$, since $Q^{n-1}(1_{G(n+1)}) = 0$ for degree reasons. Thus (\ref{Q(n,n-1) sequence}) splits after pulling back by $p_{2n-1}$ and so agrees with the Mahowald sequence.  We have proved \propref{Q(n,n-1) prop}(a).

The proof of \propref{Q(n,n-1) prop}(b) now follows easily. \lemref{pushout pullback lemma} tells us that there is a commutative diagram with exact rows
\begin{equation*}
\SelectTips{cm}{}
\xymatrix{
0  \ar[r] &  G(n)   \ar[r]& \Sigma Q(n,n)   \ar[r]&  \Sigma G(2n)  \ar[r] & 0 \\
0  \ar[r] &  G(n) \ar@{=}[u]  \ar[r]& M \ar[u]  \ar[r]&  G(2n+1) \ar[u]_{p_{2n}} \ar[r] & 0 \\
0  \ar[r] & \Sigma^{-1} G(n+1)  \ar[u]_{p_n} \ar[r]& Q(n+1,n) \ar[u] \ar[r]&   G(2n+1) \ar@{=}[u] \ar[r] & 0, }
\end{equation*}
in which the upper right square is a pullback and the lower left square is a pushout. Now note that $p_{2n}$ is an isomorphism, and that the bottom row has just been identified as being equivalent to the sequence
$$ 0 \ra \Sigma^{-1} G(n+1)  \xra{ Sq^{n+1} \cdot}  \Sigma^{-1} G(2n+2) \xra{p_{2n+1}} G(2n+1) \ra 0.$$

\end{proof}

Now we turn to the proof of \thmref{squaring thm}, which said that for all right $A$--modules $M$, the diagram 
\begin{equation} \label{squaring diagram}
\SelectTips{cm}{}
\xymatrix{
\Ext_A^{s,s}(G(2n),M) \ar[d]^{Sq^n} \ar[rrr]^-{h_0 } &&& \Ext_A^{s+1,s+1}(G(2n),M) \ar[d]^{Sq^n}  \\
\Ext_A^{s,s}(G(n),M) \ar[urrr]^-{Q^n} \ar[rrr]^-{(n+1)h_0} &&& \Ext_A^{s+1,s+1}(G(n),M) }
\end{equation}
commutes.

As usual, to show that the top triangle commutes, we (just) need to check that the triangle 
\begin{equation} \label{hom squaring diagram}
\SelectTips{cm}{}
\xymatrix{
\Hom_A(G(2n),G(2n)) \ar[d]^{Sq^n} \ar[rrr]^-{h_0 } &&& \Ext_A^{1,1}(G(2n),G(2n))   \\
\Hom_A(G(n),G(2n)) \ar[urrr]^-{Q^n} &&&  }
\end{equation}
commutes.

By definition, $Sq^nQ^n(1_{G(2n)})$ is represented by the bottom line of this pushout diagram:
\begin{equation} \label{squaring diagram 2}
\SelectTips{cm}{}
\xymatrix{
0 \ar[r] & \Sigma G(n)  \ar[d]^{Sq^n} \ar[r] & \Sigma^2 Q(n,n) \ar[d] \ar[r] & \Sigma^2 G(2n) \ar@{=}[d] \ar[r] &  0  \\
0 \ar[r] & \Sigma G(2n) \ar[r] & P \ar[r] & \Sigma^2 G(2n) \ar[r] &  0. }
\end{equation}
Diagram (\ref{hom squaring diagram}) commutes if this bottom line is equivalent to 
$$ 0 \ra \Sigma G(2n) \ra G(2) \otimes G(2n) \ra \Sigma^2 G(2n) \ra 0.$$

Now \propref{Q(n,n-1) prop}(b) tells us that the top line of diagram (\ref{squaring diagram 2}) is itself the bottom line of this pushout diagram:
\begin{equation} \label{squaring diagram 3}
\SelectTips{cm}{}
\xymatrix{
0 \ar[r] &  G(1+n)  \ar[d]^{p_n} \ar[r]^{Sq^{1+n}} & G(2+2n) \ar[d] \ar[r] & \Sigma G(2n+1) \ar[d]^{\wr} \ar[r] &  0  \\
0 \ar[r] & \Sigma G(n)   \ar[r] & \Sigma^2 Q(n,n)  \ar[r] & \Sigma^2 G(2n)  \ar[r] &  0.  }
\end{equation}
Thus the next lemma will finish the proof that (\ref{hom squaring diagram}) commutes.
\begin{lem} The diagram 
\begin{equation} \label{squaring diagram 4}
\SelectTips{cm}{}
\xymatrix{
0 \ar[r] &  G(1+n)  \ar[d]^{p_n} \ar[r]^{Sq^{1+n}} & G(2+2n) \ar[dd]^p \ar[r] & \Sigma G(2n+1) \ar[dd]^{\wr} \ar[r] &  0  \\
& \Sigma G(n) \ar[d]^{Sq^n}   &    &  &   \\  
0 \ar[r] & \Sigma G(2n) \ar[r] & G(2) \otimes G(2n) \ar[r] & \Sigma^2 G(2n) \ar[r] &  0.}
\end{equation}
commutes, where $p$ is the map that is nonzero in degree $(2+2n)$.
\end{lem}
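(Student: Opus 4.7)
The plan is to verify commutativity of each square by chasing the top generator $\iota_k$ of $G(k)$ through both paths, exploiting the universal property $\Hom_A(G(k),M) \simeq M_k$ for unstable $M$, which says any such map is completely determined by the image of $\iota_k$.

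For the right square, both composites are maps $G(2n+2) \to \Sigma^2 G(2n)$, and $(\Sigma^2 G(2n))_{2n+2} \simeq \Z/2$, so the square commutes as soon as both composites are nonzero. The top route sends $\iota_{2n+2}$ to the top class of $\Sigma G(2n+1)$ via the Mahowald quotient $p_{2n+1}$, and then to $\iota_{2n} \in \Sigma^2 G(2n)$ via the isomorphism induced by $p_{2n}$. The bottom route sends $\iota_{2n+2}$ to $\iota_2 \otimes \iota_{2n}$ by the defining normalization of $p$, and then to $\iota_{2n}$ via the quotient $G(2) \otimes G(2n) \to \Sigma^2 G(2n)$ of the lower exact sequence. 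Both outputs are nonzero, so the right square commutes.

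For the left square, I will trace $\iota_{n+1}$. Going right then down, $Sq^{n+1}$ sends $\iota_{n+1}$ to $\iota_{2n+2} Sq^{n+1}$, and $p$ carries this to $(\iota_2 \otimes \iota_{2n}) Sq^{n+1}$. Expanding by the Cartan formula and applying the unstable vanishings $\iota_2 Sq^i = 0$ for $i \geq 2$ in $G(2)$ and $\iota_{2n} Sq^{n+1} = 0$ in $G(2n)$, the sum collapses to the single surviving term $\iota_2 Sq^1 \otimes \iota_{2n} Sq^n$. Going down then right, $p_n$ sends $\iota_{n+1}$ to $\sigma\iota_n$; $Sq^n$ then sends this to $\sigma(\iota_{2n} Sq^n)$; and finally the inclusion $\Sigma G(2n) \hookrightarrow G(2) \otimes G(2n)$ — obtained by tensoring $G(2n)$ with the inclusion $\Sigma\Z/2 \hookrightarrow G(2)$ whose image is the degree one class $\iota_2 Sq^1$ — sends this to $\iota_2 Sq^1 \otimes \iota_{2n} Sq^n$. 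The two images agree.

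The main obstacle is just bookkeeping: fixing the normalization of $p$ so that it hits the top tensor class, identifying the injection of the lower exact sequence with $x \mapsto \iota_2 Sq^1 \otimes x$, and applying the Cartan formula correctly for the right action. Once these conventions are pinned down, the verification reduces to the two unstable vanishings $\iota_2 Sq^{i} = 0$ for $i \geq 2$ and $\iota_{2n} Sq^{n+1} = 0$, both forced by the degrees.
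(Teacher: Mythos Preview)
Your proof is correct and follows essentially the same approach as the paper: both check the right square by noting that $(\Sigma^2 G(2n))_{2n+2}$ is one-dimensional and all maps are nonzero in that degree, and both verify the left square by tracing $\iota_{n+1}$ around both routes using the Cartan formula and the unstable condition. Your write-up is slightly more explicit about why the higher Cartan terms $\iota_2 Sq^i \otimes \iota_{2n} Sq^{n+1-i}$ with $i\geq 2$ vanish, but otherwise the arguments coincide.
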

\begin{proof} The right square commutes, as this can be checked in degree $(2+2n)$ and all four maps are nonzero in this degree.  

To check the left square commutes, we need to check that this is the case when evaluated on $\iota_{1+n}$. 

The top left map sends $\iota_{1+n}$ to $i_{2+2n}Sq^{1+n}$. Since $p(\iota_{2+2n}) = \iota_2 \otimes \iota_{2n}$, we can compute:
\begin{equation*}
\begin{split}
p(i_{2+2n}Sq^{1+n}) &
= (\iota_2 \otimes \iota_{2n})Sq^{1+n} \\
  & = \iota_2 \otimes \iota_{2n}Sq^{1+n} + \iota_2Sq^1 \otimes \iota_{2n}Sq^n \text{ (by the Cartan formula)} \\
  & = \iota_2Sq^1 \otimes \iota_{2n}Sq^n \text{ (by the unstable condition)}.
\end{split}
\end{equation*}

This agrees with the composite in the other direction: the left two vertical maps send $\iota_{1+n}$ to $\sigma \iota_{2n}Sq^n$, and then the lower left map sends $\sigma x$ to $\iota_2Sq^1 \otimes x$ for all $x \in G(2n)$.
\end{proof}

To finish the proof of \thmref{squaring thm}, we use what we have just proved to show that the lower triangle in (\ref{squaring diagram}) commutes.  The Dyer-Lashof Adem relations tell us that 
$$ (Q^nx)Sq^n = \sum_i \binom{0}{n-2i} Q^i(xSq^i) = \begin{cases}
Q^m(xSq^m) = h_0x & \text{if } n = 2m \\ 0 & \text{if $n$ is odd}.
\end{cases}
$$
This rewrites as $(Q^nx)Sq^n = (n+1)h_0x$.

\section{Geometric realization} \label{geometric sec}

As in \cite{kuhn83}, call a spectrum $X$ {\em spacelike} if it is a retract of a suspension spectrum.

The finite spectrum $T(n)$ is the $n$-dual of the $n$th Brown-Gitler spectrum\footnote{This is logically backwards. In \cite{bg} Brown and Gitler first construct $T(n)$ and then study the homology theory defined by its $n$-dual. The notation $T(n)$ was used by them, and in subsequent papers by Lannes, Goerss, this author, and others, and should not be confused with telescopic $T(n)$, used by many, including this author.}, and has the following remarkable properties:
\begin{enumerate}
\item $H_*(T(n)) \simeq G(n)$.
\item (Brown-Gitler property) If $X$ is any spacelike spectrum, the natural map
$[T(n),X] \ra H_n(X)$, sending $f$ to $f_*(\iota_n)$, is onto.
\item $T(n)$ is spacelike.
\end{enumerate}
Brown and Gitler \cite{bg} showed that there are $T(n)$ satisfying the first and second property, and the third property was proved by Goerss \cite{goerss T(n) is spacelike} and Lannes \cite{lannes T(n) are spacelike}.  A nice proof of all of this is given in \cite{glm}, and the equivalence of the second and third properties, assuming the first, is shown in \cite{hunter kuhn}.

Using these properties, it is easy prove \lemref{realizing lem} and \thmref{realizing thm}.  Since $T(n)$ is spacelike, so is $T(n) \sm P_0$.  The Brown-Gitler property of $T(n+r)$ then implies that there exists $f(n,r): T(n+r) \ra T(n) \sm P_0$ such that $f_*(\iota_{n+r})$ equals the sum of all the basis elements of $G(n) \otimes H_*(P_0)$ in degree $(n+r)$. Thus $f(n,r)_* = q(n,r)$, proving \lemref{realizing lem}.

As in the introduction,  we now let  $s(n,r): T(n+r) \ra T(n)$ be the composite
$$ T(n+r) \xra{f(n,r)} T(n) \sm P_0 \xra{1 \sm tr} T(n).$$
If one lets $X(n,r)$ be the fiber of $s(n,r)$, one gets a commutative diagram
\begin{equation*}
\SelectTips{cm}{}
\xymatrix{\Sigma^{-1} T(n)  \ar[r] \ar@{=}[d] & X(n,r) \ar[r] \ar[d] & T(n+r) \ar[d]^{f(n,r)} \ar[r]^-{s(n,r)} & T(n) \ar@{=}[d]  \\
\Sigma^{-1} T(n)  \ar[r] & T(n) \sm P_{-1} \ar[r] & T(n) \sm P_0 \ar[r]^-{tr} & T(n),   \\
 }
\end{equation*}
in which the horizontal rows are cofibration sequences.

By construction, we see that applying homology to the left two squares of this diagram realizes the diagram of $A$-modules appearing in \thmref{main thm}, and we have proved \thmref{realizing thm}.

\section{Towards applications} \label{applications sec}
Given a spectrum $X$, consider the following two conditions that may or may not hold:
\begin{itemize}

\item[(a)] (geometric condition) \ $[T(\star),X] \ra \Hom_A(G(\star), H_*(X))$ is onto.

\item[(b)] (algebraic condition) \ $\Hom_A(G(\star), H_*(X))$ generates \\ $\bigoplus_{s=0}^{\infty}\Ext_A^{s,s}(G(\star), H_*(X))$ as a module over the Dyer-Lashof algebra.
\end{itemize}

We have an obvious consequence of \corref{ASS cor}.

\begin{cor} If conditions (a) and (b) hold for a spectrum $X$, then, for all $s$ and $n$, $\Ext^{s,s}_A(G(\star), H_*(X))$ consists of Adams spectral sequence permanent cycles.
\end{cor}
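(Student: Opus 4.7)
The plan is to combine condition (a), Corollary~\ref{ASS cor}, and condition (b) in sequence; once these are in place the argument is essentially mechanical.

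First, for each $n$ the map of condition (a), $[T(n), X]_0 \ra \Hom_A(G(n), H_*(X))$ sending $f$ to $f_*(\iota_n)$, is the edge homomorphism of the Adams spectral sequence for $[T(n), X]$ at $s = 0$; it factors as $[T(n), X]_0 \era E_\infty^{0,0} \hra E_2^{0,0} = \Hom_A(G(n), H_*(X))$. Surjectivity of the composite forces $E_\infty^{0,0} = E_2^{0,0}$, so every element of $\Ext^{0,0}_A(G(n), H_*(X))$ is a permanent cycle.

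Next, Corollary~\ref{ASS cor} supplies maps of Adams spectral sequences $s(n,r)^*: \{E_k^{s,t}(T(n), X)\} \ra \{E_k^{s+1, t+1}(T(n+r), X)\}$ that realize the Dyer-Lashof operations $Q^r$ on $E_2$; the Steenrod operations on $E_2$ are similarly realized by the maps $a \cdot : G(n) \ra G(n+k)$ with $a \in A^k$. Since maps of spectral sequences carry permanent cycles to permanent cycles, applying any element of the Dyer-Lashof algebra to a permanent cycle yields another permanent cycle.

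Finally, condition (b) says every element of $\bigoplus_{s \geq 0} \Ext^{s,s}_A(G(\star), H_*(X))$ lies in the Dyer-Lashof algebra submodule generated by $\Ext^{0,0}_A(G(\star), H_*(X))$, so by the previous two observations it is itself a permanent cycle. There is no substantive obstacle: Corollary~\ref{ASS cor} does all the work of intertwining the algebraic Dyer-Lashof action on $\Ext$ with the topological one on the spectral sequence, and conditions (a) and (b) are precisely engineered to reduce the permanent cycle question to the $s = 0$ edge.
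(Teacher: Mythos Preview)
Your proof is correct and is exactly the argument the paper has in mind when it says ``obvious consequence of \corref{ASS cor}'': condition (a) forces the $s=0$ edge to consist of permanent cycles, \corref{ASS cor} propagates permanent cycles along Dyer--Lashof operations, and condition (b) then reaches everything. The remark about Steenrod operations is harmless but unnecessary, since condition (b) speaks only of the Dyer--Lashof module structure.
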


Conditions (a) and (b) also come up in a rather different context. The paper \cite{kuhn mccarty} was a study of the 2nd quadrant mod 2 homology spectral sequence $\{E^r_{*,*}(X)\}$ associated to the Goodwillie tower of the functor from spectra to spectra sending $X$ to $\Sinfty_+ \Oinfty X$.   One of the main results is that $E^{\infty}_{*,*}(X)$ is an algebraic functor of the right $A$--module $H_*(X)$ when the following two conditions hold:
\begin{itemize}
\item[(a')] (geometric condition) \ The evaluation map $e: \Sinfty \Oinfty X \ra X$ induces an epimorphism $e_*: H_*(\Oinfty X) \ra \Oinfty H_*(X)$.

\item[(b')] (algebraic condition) \ $\Oinfty H_*(X)$ generates  $\bigoplus_{s=0}^{\infty}L_s(H_*(X))$ as a module over the Dyer-Lashof algebra where, if $M \in \M$,
$$L_s(M) = \IM \{ \Oinfty_s(\Sigma^{-s} M) \xra{\epsilon} \Sigma^{-1}\Oinfty_s(\Sigma^{1-s} M)\}.$$
\end{itemize}

Explicitly, when (a') and (b') hold, \cite[Cor.1.14]{kuhn mccarty} says that one has an algebraic description of the associated graded of $H_*(\Oinfty X)$: there is then an isomorphism of algebras in $\QU$:
\begin{equation} \label{Einfty formula} E^{\infty}_{*,*}(X) \simeq S^*(\bigoplus_{s=0}^{\infty} L_s(H_*(X)))/(x^2 - Q^{|x|}(x)),
\end{equation}
where $x \in L_s(H_*(X))_n$ has bidegree $(-2^s,2^s+n)$ in $E^{\infty}_{*,*}(X)$.

Conditions (a') and (b') were shown to be satisfied in two extreme cases: when $X$ is a suspension spectrum or when $X$ is an Eilenberg--MacLane spectrum, and one wondered about other examples.

To help with this, we relate conditions (a') and (b') to (a) and (b).

\begin{prop} \label{a = a' prop} Condition (a) is equivalent to condition (a'). 
\end{prop}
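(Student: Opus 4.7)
The plan is to identify the maps appearing in (a) and (a$'$) as the right vertical and bottom arrows of a single natural commutative square
\begin{equation*}
\SelectTips{cm}{}
\xymatrix{
[T(n), \Sinfty \Oinfty X] \ar[r]^-{e \circ -} \ar[d]_{\beta_1} & [T(n), X] \ar[d]^{\beta_2} \\
H_n(\Oinfty X) \ar[r]^-{u} & (\Oinfty H_*(X))_n,
}
\end{equation*}
and then to extract the equivalence by comparing images. Here $\beta_1$ and $\beta_2$ send $f$ to $f_*(\iota_n)$, so $\beta_2$ is precisely the map in (a); and $u$ is the restriction of $e_*:H_*(\Oinfty X)\to H_*(X)$, which lands in $\Oinfty H_*(X)$ because $H_*(\Oinfty X)$ is itself unstable, so the surjectivity of $u$ is exactly condition (a$'$).

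First I would record two surjectivities that come for free. Since $\Sinfty \Oinfty X$ is a suspension spectrum, it is spacelike, and the Brown-Gitler property makes $\beta_1$ surjective. Commutativity then forces $\IM(u) = \IM(u\beta_1) = \IM(\beta_2 e_*) \subseteq \IM(\beta_2)$, giving the easy direction (a$'$)$\Rightarrow$(a) immediately.

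For the converse the key step is that the top horizontal $e_* : [T(n), \Sinfty \Oinfty X] \to [T(n), X]$ is itself surjective. This is where the spacelike property of $T(n)$ is used substantively: fixing a retraction $T(n) \xra{i} \Sinfty Z \xra{r} T(n)$ with $ri = 1$, any $g : T(n) \to X$ produces $g \circ r : \Sinfty Z \to X$, which by the $\Sinfty \dashv \Oinfty$ adjunction factors as $e \circ \Sinfty(\bar g)$ for a unique map of spaces $\bar g : Z \to \Oinfty X$; the composite $\Sinfty(\bar g) \circ i$ is then a lift of $g$ through $e$. Granted this, if (a) holds then $\IM(u) = \IM(\beta_2 e_*) = \IM(\beta_2) = (\Oinfty H_*(X))_n$, proving (a$'$).

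I expect the spacelike-implies-liftable observation to be the only piece of genuine content; everything else is bookkeeping that organizes the Brown-Gitler property and the definition of $u$ into a single diagram chase.
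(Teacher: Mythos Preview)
Your proof is correct and follows essentially the same route as the paper: the same commutative square, the Brown--Gitler property giving surjectivity of the left vertical, and the spacelike property of $T(n)$ giving surjectivity of the top horizontal. The paper cites \cite[Prop.~2.4]{kuhn83} for the latter (phrased as ``$\Oinfty e$ has a section and $T(n)$ is spacelike''), while you spell out the adjunction argument directly, but the content is identical.
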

We postpone the proof.

More clearly,  there is an epimorphism of modules over the Dyer-Lashof algebra
\begin{equation} \label{epi equation} \bigoplus_{s=0}^{\infty} \Ext_A^{s,s}(G(\star), M) = \bigoplus_{s=0}^{\infty} \Oinfty_s(\Sigma^{-s}M) \era \bigoplus_{s=0}^{\infty} L_s(M),
\end{equation}
and so condition (b) implies condition (b').

\begin{rem}  One can show that if $\Ext_{A}^{s,s}(G(n), M) = 0$ whenever $n$ is odd, then epimorphism (\ref{epi equation}) will be an isomorphism.
\end{rem}

Now the idea is to study condition (b) using \thmref{main thm}, especially in situations when condition (a) (or (a')) holds.  Here is one intriguing family of spectra where this happens.

\begin{prop} \label{condition (a) lemma}  Condition (a') holds if $X = \Sigma^2 BP\langle h \rangle$, for $0\leq h \leq \infty$ (so the spectral sequences studied in \cite{kuhn mccarty} are converging to the mod 2 homology of the infinite loopspaces $\mathbb C \mathbb P^{\infty}$, $BU$, $BP\langle 2 \rangle_2$, \dots, $BP_2$, equipped with Dyer--Lashof and Steenrod operations).
\end{prop}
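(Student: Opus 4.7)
The plan is to invoke \propref{a = a' prop} to reduce the problem to verifying condition (a): for every $n \geq 0$, the natural map
$$ [T(n),\Sigma^2 BP\langle h\rangle] \lra \Hom_A(G(n), H_*(\Sigma^2 BP\langle h\rangle)) $$
should be onto. By the identification $\Hom_A(G(n),M) = (\Oinfty M)_n$ of \secref{unstable subsec}, the target is precisely the degree-$n$ piece of the maximal unstable submodule of $H_*(\Sigma^2 BP\langle h\rangle)$, so the task is to realize every class in this unstable submodule by a stable map out of some $T(n)$.

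The geometric input I would use is the complex orientation of $BP\langle h\rangle$, which provides a canonical map
$$ \phi : \Sinfty \mathbb{CP}^\infty \lra \Sigma^2 BP\langle h\rangle $$
whose adjoint is the standard inclusion $\mathbb{CP}^\infty \ra \Oinfty \Sigma^2 BP\langle h\rangle$. Since $\Sinfty \mathbb{CP}^\infty$ is spacelike, the Brown--Gitler property of $T(n)$ yields, for every $y \in H_n(\mathbb{CP}^\infty)$, a stable map $T(n) \ra \Sinfty \mathbb{CP}^\infty$ realizing $y$ in mod 2 homology. Composing with $\phi$, every class in the image of
$$ \phi_* : H_*(\mathbb{CP}^\infty) \lra H_*(\Sigma^2 BP\langle h\rangle) $$
is realized by a stable map out of $T(\star)$.

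So the proof reduces to the algebraic identification
$$ \IM(\phi_*) \;=\; \Oinfty H_*(\Sigma^2 BP\langle h\rangle). $$
The inclusion $\subseteq$ is automatic since $H_*(\mathbb{CP}^\infty) \in \U$. For the reverse inclusion, I would use the standard description of $H_*(BP\langle h\rangle)$ as a sub-$A_*$-comodule of $A_*$ (complementary to the ideal generated by $Q_0,\dots,Q_h$ on the cohomology side), identify the classes $\phi_*(b^k) \in H_{2k-2}(BP\langle h\rangle)$ via the mod 2 reduction of the logarithm coefficients of the given complex orientation, and check directly that the $A$-submodule of $\Sigma^2 H_*(BP\langle h\rangle)$ they span exhausts the maximal unstable submodule.

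The main obstacle is this last verification. The cases $h = 0$ and $h = \infty$ should reduce to direct computations in $\Sigma^2 H\Z_{(2)}$ and $\Sigma^2 BP$, respectively. For intermediate $h$ I would argue by naturality along the tower of ring maps $BP \lra \cdots \lra BP\langle h\rangle \lra \cdots \lra H\Z_{(2)}$, all compatible with the complex orientation, combined with a bookkeeping in the Milnor conjugate basis for the $A_*$-coaction on $H_*(BP\langle h\rangle)$. Verifying that no new unstable class appears upon passing from $BP$ down to $BP\langle h\rangle$ that fails to lie in the $A$-span of $\{\phi_*(b^k)\}$ is where I expect the real difficulty to lie.
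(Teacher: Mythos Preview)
Your overall strategy matches the paper's: use the complex orientation $u_h: \Sinfty \mathbb C \mathbb P^{\infty} \ra \Sigma^2 BP\langle h\rangle$ (your $\phi$), note that it factors through $e$ by adjunction so one may work directly with condition (a'), and reduce to showing that $u_{h*}$ surjects onto $\Oinfty H_*(\Sigma^2 BP\langle h\rangle)$. The difference is in how the dependence on $h$ is handled, and here the paper has a simplification that dissolves exactly the difficulty you anticipate in your last paragraph.

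The map $BP\langle h\rangle \ra BP\langle 0\rangle = H\Z$ induces the epimorphism $A//E(0) \era A//E(h)$ on cohomology, hence a monomorphism on homology, hence a monomorphism
\[
\Oinfty H_*(\Sigma^2 BP\langle h\rangle) \hra \Oinfty H_*(\Sigma^2 H\Z).
\]
Since the orientations are compatible along this map, surjectivity of $u_{0*}: H_*(\mathbb C \mathbb P^{\infty}) \ra \Oinfty H_*(\Sigma^2 H\Z)$ immediately forces surjectivity of $u_{h*}$ for every $h$. There is thus no need to treat $h=\infty$ separately, no bookkeeping in the Milnor conjugate basis for intermediate $h$, and no worry about ``new unstable classes'': the unstable submodule for general $h$ already embeds in the one for $h=0$.

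The case $h=0$ is then a short computation rather than an analysis of logarithm coefficients. One has $\Oinfty H_n(\Sigma^2 H\Z) \simeq \Hom_{E(0)}(G(n),\Sigma^2\Z/2)$, which is one-dimensional exactly when $n=2^k$ for some $k\geq 1$ and zero otherwise. The map $u_{0*}$ is visibly nonzero in degree $2$, and since the generator of $H_{2^{k+1}}(\mathbb C \mathbb P^{\infty})$ hits the generator of $H_{2^k}(\mathbb C \mathbb P^{\infty})$ under $Sq^{2^k}$, an induction on $k$ shows $u_{0*}$ is nonzero in every degree $2^k$.
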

We also postpone the proof of this proposition.

$H^*(BP\langle h \rangle) = A//E(h)$, where $E(h)$ is the sub-Hopf algebra of $A$ which, as an algebra, is the exterior algebra on the Milnor primitives $Q_0, Q_1, \dots, Q_h$.  One is led to the problem of understanding $\bigoplus_s \Ext_{E(h)}^{s,s}(G(\star), \Sigma^2 \Z/2)$ as a module over the Dyer--Lashof algebra.

This is already interesting when $h=1$, and $BP\langle 1 \rangle = ku$.
The thesis of Brian Thomas \cite{Brian Thomas thesis}, and a related NSF RTG collaborative research project \cite{RTG paper}, study this case, by using the description of our $A$-module extensions given in \thmref{main thm} to study these extensions restricted to $E(1)$. 

Thomas uses this to show that when $X = \Sigma^2 ku$, condition (b) holds. His calculations, together with our new \thmref{squaring thm}, let one use (\ref{Einfty formula}) to show that $E^{\infty}_{*,*}(\Sigma^2ku)$ is polynomial on even degree classes, in agreement with $H_*(BU)$.

The RTG collaboration has reworked some of Thomas' results, and determined when $Q(n,r)$ is free over $E(1)$ (and even $A(1)$) with implications for the finite spectra $X(n,r)$.   One easy-to-state conclusion is that, if $2^k>n$, then $X(n,2^k)$ is a finite spectrum of type 2.  This seems a rather novel source of type 2 complexes, and one wonders if there are some systematic ways of building higher type complexes from Brown-Gitler spectra.

\begin{proof}[Proof of \propref{a = a' prop}]

We wish to show that conditions (a) and (a') are equivalent.

Recall that, given $M \in \M$, $\Hom_A(G(n),M) = \Oinfty M_n$, the degree $n$ part of $\Oinfty M$.

Condition (a) thus says that  $[T(n),X] \ra \Oinfty H_n(X)$ is onto for all $n$. 

Recall that $e: \Sinfty \Oinfty X \ra X$ is the evaluation map.  Since $\Sinfty \Oinfty X$ is a suspension spectrum, its homology is unstable, and thus the image of $e_*$ will be a submodule of $\Oinfty H_*(X)$. Condition (a') then says that $e_*: H_n(\Oinfty X) \ra \Oinfty H_n(X)$ is onto for all $n$.

We have a commutative diagram
\begin{equation*}
\SelectTips{cm}{}
\xymatrix{
[T(n),\Sinfty \Oinfty X] \ar[d]^{(iv)} \ar[r]^-{(iii)}_-{e_*} & [T(n),X] \ar[d]^{(ii)}  \\
H_n(\Oinfty X) \ar[r]^-{(i)}_-{e_*} & \Oinfty H_n(X), }
\end{equation*}
where the two vertical homomorphisms send a map $f$ to $f_*(\iota_n)$.  The map (iv) is onto by the Brown-Gitler property of $T(n)$, while a standard argument \cite[Prop. 2.4]{kuhn83} shows that (iii) is onto because $\Oinfty e$ has a section and $T(n)$ is spacelike.  It follows that (i) is onto if and only if (ii) is onto.
\end{proof}

\begin{proof}[Proof of \propref{condition (a) lemma}]

We need to show that
$$e_*: H_*(\Oinfty \Sigma^2 BP\langle h\rangle) \ra \Oinfty H_*(\Sigma^2 BP\langle h\rangle)$$
is onto for all $h$.

A complex orientation $u_h \in BP\langle h\rangle^2(\mathbb C \mathbb P^{\infty})$ can be viewed as a map $u_h: \Sinfty \mathbb C \mathbb P^{\infty} \ra \Sigma^2 BP\langle h\rangle$, and it is formal that this lifts through
$$e: \Sinfty \Oinfty \Sigma^2 BP\langle h\rangle \ra \Sigma^2 BP\langle h\rangle.$$  Thus it suffices to show that
$$ u_{h*}: H_*(\mathbb C \mathbb P^{\infty}) \ra \Oinfty H_*(\Sigma^2 BP\langle h\rangle)$$
is onto.

The natural map $BP\langle h \rangle \ra BP\langle 0 \rangle = H\Z$ induces the epimorphism $A//E(0) \ra A//E(h)$ in cohomology, and thus a monomorphism
$$\Oinfty H_*(\Sigma^2 BP\langle h\rangle) \ra \Oinfty H_*(\Sigma^2 H\Z).$$
Since the composite $\Sinfty \mathbb C \mathbb P^{\infty} \xra{u_h} \Sigma^2 BP\langle h\rangle \ra \Sigma^2H\Z$ is $u_0$, if we show $u_{0*}: H_*(\mathbb C \mathbb P^{\infty}) \ra \Oinfty H_*(\Sigma^2 H\Z)$ is onto, the lemma will follow for all $h$ (and those monomorphisms must be isomorphisms).

The map $u_{0*}: H_*(\mathbb C \mathbb P^{\infty}) \ra \Oinfty H_*(\Sigma^2 H\Z)$ is certainly onto in degree 2, and then Steenrod operations show that it will also be nonzero in degrees $2^k$ for all $k\geq 1$.  Meanwhile the range of $u_{0*}$ can be easily computed:
$$ \Oinfty H_n(\Sigma^2 H\Z) \simeq \Hom_{E(0)}(G(n), \Sigma^2 \Z/2),$$
which is easily checked to be nonzero and one dimensional exactly when $n = 2^k$ for $k\geq 1$.
\end{proof}

\end{document}